\newtheorem{cor}{Corollary}[section]
\newtheorem{lem}[cor]{Lemma}
\newtheorem{prop}[cor]{Proposition}
\newtheorem{propo}{Proposition}
\newtheorem{theor}[propo]{Theorem}
\title{\bf Refined regularity for the blow-up set at non characteristic points for the complex semilinear wave equation}
 \author{\small Asma AZAIEZ\footnote{This author is supported by the ERC Advanced Grant no. 291214, BLOWDISOL.}  \\
\small Universit\'e de Cergy-Pontoise, \\
\small AGM, CNRS (UMR 8088), 95302, Cergy-Pontoise, France.\\}
\begin{document}
\maketitle
\begin{abstract}
In this paper, we consider a blow-up solution for the complex-valued semilinear wave equation with power non-linearity in one space dimension. We show that the set of non characteristic points $I_0$ is open and that the blow-up curve is of class $C^{1,\mu_0}$ and the phase $\theta$ is $C^{\mu_0}$ on this set. In order to prove this result, we introduce a Liouville Theorem for that equation. 
 \end{abstract}
{\bf Keywords}: Wave equation,  complex valued PDE, Liouville Theorem, regularity.

\noindent {\bf AMS classification}: 
\section{Introduction}
\subsection{The problem and known results}
We consider the following complex-valued one-dimensional semilinear wave equation
\begin{equation} \left\{
\begin{array}{l}
\displaystyle\partial^2_{t} u = \partial^2_{x} u+|u|^{p-1}u, \\
u(0)=u_{0} \mbox{ and }  u_{t}(0) = u_{1},
\end{array}
\right . \label{waveq}
\end{equation}
where $u(t):  x\in \mathbb{R}\to u(x,t) \in \mathbb{C} , u_0 \in
H^1_{loc,u}$ and $ u_1\in L^2_{loc,u}$,with $$||
v||^2_{L^2_{loc,u}}=\displaystyle\sup\limits_{a\in \mathbb{ R}}
\int_{|x-a|<1}|v(x)|^2 dx  \mbox{ and }|| v||^2_{H^1_{loc,u}}=||
v||^2_{L^2_{loc,u}}+||  \nabla v||^2_{L^2_{loc,u}}\cdot$$
\medskip
The Cauchy problem for equation (\ref{waveq}) in the space $H^1_{loc,u}\times L^2_{loc,u}$ follows from the finite speed of propagation and the wellposedness in $H^1\times L^2$. See for instance Ginibre, Soffer and Velo \cite{MR1190421}, Ginibre and Velo \cite{MR1256167}, Lindblad and Sogge \cite{MR1335386} (for the local in time wellposedness in $H^1\times L^2$). Existence of blow-up solutions follows from ODE techniques or the energy-based blow-up criterion of \cite{l74}.
 More blow-up results can be found in Caffarelli and Friedman \cite{MR849476}, Alinhac \cite{ali95} and \cite{MR1968197}, Kichenassamy and Littman \cite{KL93a}, \cite{KL93b} Shatah and Struwe \cite{MR1674843}). 
 \medskip
 
 The real case (in one space dimension) has been understood completely, in a series of papers by Merle and Zaag \cite{MR2362418}, \cite{MR2415473}, \cite{MR2931219} and \cite{Mz12} and in C\^ote and Zaag \cite{CZ12} (see also the note \cite{Mz10}). 
Recently, the authors give an  extension to higher dimensions in \cite{MZ13} and \cite{MZ13'}, where the blow-up behavior is given, together with some stability results.
 
additional zero eigenfunction in the linearized equation around the expected profile, and also because of the coupling between the real and the imaginary parts.
For other types of nonlinearities, we mention our recent contribution with Masmoudi and Zaag in \cite{3}, where we study the semilinear wave equation with exponential nonlinearity. In particular, we give the blow-up rate with some estimates.

In \cite{<3}, the author consider the complex-valued solution of (\ref{waveq}), characterize all stationary solutions and give a trapping result. In this paper, we aim at studying the structure of the set of non characteristic points and  the regularity of the blow-up curve and the phase.

Let us first introduce some notations before stating our results.

\medskip
If \emph{u} is a blow-up solution of (\ref{waveq}), we define (see
for example Alinhac \cite{ali95}) a continuous curve $\Gamma$ as the graph of a function ${x \rightarrow T(x)}$ such that the domain of definition of $u$ (or the maximal influence domain of $u$) is 
\begin{equation}\label{domaine-de-definition}
D_u=\{(x,t)| t<T(x)\}.
\end{equation}

From the finite speed of propagation, $T$ is a 1-Lipschitz function. The time $\bar {T}=\inf_{x \in \mathbb R}T(x)$ and the graph $\Gamma$  are called (respectively) the blow-up time and the blow-up graph of $u$.

Let us introduce the following non-degeneracy condition for $\Gamma$. If we introduce for all $x \in \mathbb{R},$ $t\le T(x)$ and $\delta>0$, the cone
\begin{equation*}
 \mathcal{C}_{x,t, \delta }=\{(\xi,\tau)\neq (x,t)\,|  \tau\le t-\delta |\xi-x|\},
\end{equation*}
then our non-degeneracy condition is the following: $x_0$ is a non-characteristic point if
\begin{equation}\label{4}
 \exists \delta_0 = \delta (x_0) \in (0,1) \mbox{ and } t_0 (x_0) < T(x_0) \mbox{ such that } u \mbox{ is defined on }\mathcal{C}_{x_0,T(x_0), \delta_0}\cap \{t \ge t_0 \}.
\end{equation}
If condition (\ref{4}) is not true, then we call $x_0$ a characteristic point. Already when $u$ is real-valued, we know from \cite{MR2931219} and \cite{CZ12} that there exist blow-up solutions with characteristic points.

We denote by $I_0$ the set of non characteristic points.

\medskip
Given some $(x_0,T_0) $ such that $0<T_0 \le T(x_0)$, we introduce the following self-similar change of variables:
\begin{equation}\label{trans_auto}
w_{x_0,T_0}(y,s) =(T_0-t)^\frac{2}{p-1}u(x,t), \quad  y=\frac{x-x_0}{T_0-t}, \quad
s=-\log(T_0-t).
\end{equation}
If $T_0= T(x_0)$, then we write $w_{x_0}$ instead of $w_{x_0,T_0}$.
This change of variables transforms the backward light cone with vertex $(x_0, T(x_0))$ into the infinite cylinder $(y,s)\in (-1,1) \times [-\log T(x_0),+\infty).$ The function $w_{x_0}$ (we write $w$ for simplicity) satisfies the following equation for all $|y|<1$ and $s\ge -\log T_0$:
\begin{eqnarray} \label{equa}
 \partial^2_{s} w=\mathcal{L}w-\frac{2(p+1)}{(p-1)^2}w+|w|^{p-1}w-\frac{p+3}{p-1} \partial_s w- 2 y \partial_{ys} w
 \end{eqnarray}
 \begin{eqnarray}\mbox{where }\mathcal{L} w=\frac{1}{\rho}\partial_y (\rho (1-y^2)\partial_y w)\, \mbox{ and }\, \rho (y)= (1-y^2)^\frac{2}{p-1}.\label{8}
\end{eqnarray}
This equation will be studied in the space
 \begin{eqnarray}\label{9}
\mathcal{ H}=\{ q \in H_{loc}^1 \times L_{loc}^2 ((-1,1),\mathbb{C}) \, \Big| \parallel  q \parallel_{\mathcal{ H}}^2 \equiv \int_{-1}^{1}(|q_1|^2+|q'_1|^2(1-y^2)+|q_2|^2)\rho\;dy< +\infty \} ,
 \end{eqnarray}
which is the energy space for $w$. Note that $\mathcal{ H}=\mathcal{ H}_0\times L_{\rho}^2$ where
 \begin{eqnarray*}\label{}
\mathcal{ H}_0=\{ r \in H_{loc}^1  ((-1,1),\mathbb{C}) \,\Big| \parallel  r \parallel_{\mathcal{ H}_0}^2 \equiv \int_{-1}^{1}(|r'|^2(1-y^2)+|r|^2)\rho\;dy< +\infty\} .
 \end{eqnarray*}
Let us define 
\begin{equation}\label{Lyupunov}
 E(w,\partial_s w)=\int_{-1}^{1} \left( \frac{1}{2} |\partial_s w|^2+\frac{1}{2} |\partial_y w|^2 (1-y^2)+\frac{p+1}{(p-1)^2}|w|^2-\frac{1}{p+1}|w|^{p+1}\right) \rho dy.
\end{equation}
By the argument of Antonini and Merle \cite{AM01}, which works straightforwardly in the complex case, we see that $E$ is a Lyapunov functional for equation (\ref{equa}).
Similarly, some arguments of the real case, can be adapted with no problems to the complex case, others don't.

Let us first briefly state our main result in \cite{<3} , then we give the main results of this paper.
\bigskip

In \cite{<3}, we proved the existence of the blow-up profile at non-characteristic points. More precisely, this is our statement (see Theorem $4$ page $5895$ in \cite{<3}).\\
\medskip

{\it There exist positive $\mu_0$ and $C_0$ such that if $u$ a solution of (\ref{waveq}) with blow-up curve $\Gamma :\{ x \rightarrow T(x)\}$ and $x_0 \in \mathbb R$ is non-characteristic (in the sense (\ref{4})), then there exists $d_{\infty} \in (-1,1)$ and $\theta_\infty\in \mathbb{R}$, $s_0(x_0)\ge -\log T(x_0)$ such that for all $s\ge s^*(x_0)$:

  \begin{eqnarray}\label{10}
 \Big|\Big|\begin{pmatrix} w_{x_0}(s)\\\partial_s w_{x_0}(s) \end{pmatrix} -e^{i \theta (x_0)}\begin{pmatrix} \kappa(d(x_0),.)\\0\end{pmatrix} \Big|\Big|_{\mathcal{ H}}\le C_0 e^{-\mu_0(s-s(x_0))}.
  \end{eqnarray}

  $\kappa(d,y)$ is given by the following:
\begin{eqnarray}\label{defk}
\forall (d, y) \in (-1,1)^2,\;\kappa(d,y)= \kappa_0 \frac{(1-d^2)^\frac{1}{p-1}}{(1+dy)^\frac{2}{p-1}} \mbox{ and }\kappa_0=\left(\frac{2(p+1)}{(p-1)^2}\right)^\frac{1}{p-1},
\end{eqnarray}
 Moreover, we have
   \begin{equation*}
    E(w(s),\partial_s w(s)) \ge E(\kappa_0,0) \mbox{ as } s\rightarrow +\infty
   \end{equation*}  
     and
  \begin{equation*}
 ||w_{x_0}(s)-e^{i\theta(x_0)} \kappa(d(x_0))||_{H^1(-1,1)}+||\partial_s w_{x_0}(s)||_{L^2(-1,1)} \rightarrow  0 \mbox{ as } s\rightarrow +\infty.
  \end{equation*}
}
\medskip

In the real case, relying on the existence of a blow-up profile, together with Liouville type Theorem, Merle and Zaag could prove the openness of the set of non-characteristic points $I_0$ and the $C^1$ regularity of the blow-up curve restricted to $I_0$. Later, in \cite{N} Nouaili improved this by showing the $C^{1,\alpha}$ regularity of $T$. In this paper, we aim at showing the same result. In fact, the situation is more delicate since we have to deal with the regularity of the phase, a new further with respect to the real case. More precisely, this is our main result:
\begin{theor}\label{1}{\bf (Regularity of the blow-up set and continuity of the blow-up profile on $I_0$)}
Consider $u$ a solution of (\ref{waveq}) with blow-up curve $\Gamma :\{ x \rightarrow T(x)\}$.
Then, the set of non characteristic points $I_0$ is open and $T(x)$ is of class $C^{1, \mu_0}$ and  $\theta$ is of class $C^{ \mu_0}$ on that set. Moreover, for all $x \in I_0$, $T'(x) = d(x) \in (-1, 1)$ on connected components of $I_0$, where $d(x)$ and $\theta(x)$ are such that (\ref{10}) holds.
\end{theor}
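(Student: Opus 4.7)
The plan is to adapt the strategy developed by Merle--Zaag in the real case (and sharpened by Nouaili for H\"older regularity) to the complex setting. The essential inputs are the exponential convergence (\ref{10}) to the profile $e^{i\theta(x_0)}\kappa(d(x_0))$ with rate $\mu_0$, the Lyapunov functional $E$, and the Liouville theorem announced in the abstract, which classifies bounded global-in-time solutions of (\ref{equa}) as either $0$ or a soliton $e^{i\theta}\kappa(d,\cdot)$.

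First I would prove openness of $I_0$. Fix $x_0 \in I_0$ with constants $\delta_0, t_0$ as in (\ref{4}). Using (\ref{10}) at some large time $s_1$, the trace $w_{x_0}(s_1)$ is close in $\mathcal{H}$ to $e^{i\theta(x_0)}\kappa(d(x_0),\cdot)$; since $|d(x_0)|<1$, the profile stays strictly inside the cone. By continuous dependence of solutions of (\ref{equa}) on the base point $(x,T)$ and finite speed of propagation, for $x$ in a small neighborhood of $x_0$ and for a suitable choice of $T^*$ near $T(x_0)$, the rescaled solution $w_{x,T^*}(s_1)$ is also close to a soliton with parameter in $(-1,1)$. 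This gives a cone of opening $\delta_0/2$ at $(x,T(x))$ on which $u$ remains defined, so $x\in I_0$, and the parameters $d(x)$ and $\theta(x)$ are defined and continuous there.

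Next I would establish $T'(x_0)=d(x_0)$, which is the crux of the argument. For small $h$, compare $w_{x_0,T(x_0)}$ with $w_{x_0+h,T(x_0+h)}$ at a common large similarity time. The explicit change of variables between their two similarity frames is a Lorentz-type transformation determined by the ratio $(T(x_0+h)-T(x_0))/h$. Both frames converge (by (\ref{10})) to solitons $e^{i\theta(x_0)}\kappa(d(x_0),\cdot)$ and $e^{i\theta(x_0+h)}\kappa(d(x_0+h),\cdot)$, and the Liouville theorem together with the explicit form of $\kappa(d,y)$ in (\ref{defk}) forces the transformed soliton to coincide with the target; this algebraic identification pins down
\[
\lim_{h\to 0}\frac{T(x_0+h)-T(x_0)}{h}=d(x_0),
\]
and simultaneously yields $\theta(x_0+h)-\theta(x_0)\to 0$. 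Using instead the quantitative bound in (\ref{10}) at the well-chosen time $s=-\log|h|$, the discrepancy between the two frames is $O(e^{-\mu_0 s})=O(|h|^{\mu_0})$, which upgrades the qualitative identities to
\[
|T(x_0+h)-T(x_0)-d(x_0)h|\le C|h|^{1+\mu_0},\qquad |\theta(x_0+h)-\theta(x_0)|\le C|h|^{\mu_0},
\]
i.e.\ $T\in C^{1,\mu_0}$ and $\theta\in C^{\mu_0}$ on $I_0$.

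The main obstacle, and the new difficulty compared with the real case of \cite{MR2362418} and \cite{N}, is that the family of stationary profiles is now two-parameter $\{e^{i\theta}\kappa(d,\cdot)\}$, so the linearized operator around $\kappa$ carries an additional neutral direction generated by phase rotation. Handling it requires a modulation argument in which both $d$ and $\theta$ are tuned to cancel the two neutral modes, while proving that the remainder decays in $\mathcal{H}$ at rate $\mu_0$; the coupling between the real and imaginary parts must be controlled carefully so that this modulation does not degrade the $\mu_0$-rate extracted from (\ref{10}), and so that uniqueness of $(d(x),\theta(x))$ mod $2\pi$ persists on connected components of $I_0$.
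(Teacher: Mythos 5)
Your overall strategy is the one the paper actually follows: openness and $C^1$ regularity via the Liouville Theorem \ref{2} and the convergence (\ref{10}) (the Merle--Zaag scheme of \cite{MR2415473}), then the H\"older upgrade via Nouaili's device \cite{N} of comparing the profile approximations of $u$ at two nearby base points at the well-chosen time scale where $T(x_0)-t$ is comparable to $|x-x_0|$. The one place where you take a genuinely different (and heavier) route is the phase. You propose a modulation argument tuning both neutral modes $d$ and $\theta$ to control the real/imaginary coupling; the paper does not need this here, since the modulation and trapping analysis is already encapsulated in the convergence (\ref{10}) imported from \cite{<3}. Instead, the $C^{\mu_0}$ regularity of $\theta$ drops out almost for free: writing the two profile approximations of $u(x,t)$ obtained from the base points $X=x$ and $X=x_0$ against each other gives (\ref{oppa}), whose imaginary part yields $|\sin(\theta(x_0)-\theta(x))|\le C|x-x_0|^{\mu_0}$ directly, and whose real part, after the elementary bound $\cos(\theta(x_0)-\theta(x))=1+O(|x-x_0|^{2\mu_0})$, reduces the estimate on $T$ verbatim to the real case. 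This decomposition is the actual new ingredient of the paper, and it is both simpler and sharper than a fresh modulation analysis.

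Two of your steps are thinner than what is needed and deserve a warning. For openness, continuity of $x\mapsto w_{x,T^*}(s_1)$ plus finite speed of propagation does not by itself certify the cone condition (\ref{4}) for nearby $x$: being non-characteristic requires a backward cone of slope strictly less than $1$ under $(x,T(x))$, hence a quantitative bound on the local slope of $T$. The paper supplies this through the geometric Lemma \ref{2.7} (the slope of $T$ near $x_0$ is at most $(1+|d(0)|)/2<1$), which your sketch implicitly relies on but does not produce. For $T'(x_0)=d(x_0)$, your direct ``Lorentz identification'' presupposes that the difference quotient $(T(x_0+h)-T(x_0))/h$ converges, which is exactly what is at stake; the paper organizes this as a contradiction on subsequential limits $d(0)+\lambda$ with $\lambda\neq 0$, disposing of $\lambda<0$ by non-extendability of $u$ past $D_u$ and of $\lambda>0$ by the Antonini--Merle blow-up criterion, with the Liouville theorem entering through the identification of the limit $w_\pm$ in Corollary \ref{2.4}. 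Your argument should be recast in that form to be complete.
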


Note that the holder parameter $\mu_0$ is the same as the parameter displayed in the exponential convergence to the profile given in (\ref{10}).

The proof of this theorem relies on this Liouville Theorem:


\begin{theor}\label{2}{\bf (A Liouville Theorem for equation (\ref{waveq}))}
Consider $u(x,t)$ a solution to equation (\ref{waveq}) defined on the cone $\mathcal{C}_{x_*,T_*,\delta_*}$ (\ref{4}) such that for all $t < T^*$,
\begin{align}\label{15}
&(T^*-t)^\frac{2}{p-1}\frac{||u(t)||_{L^2(B(x^*, \frac{T^*-t}{ \delta_*}))}}{(T^*-t)^{1/2}}\notag\\
&+(T^*-t)^{\frac{2}{p-1}+1}\left( \frac{||\partial_t u(t)||_{L^2(B(x^*,\frac{T^*-t}{ \delta_*})}}{(T^*-t)^{1/2}}+\frac{||\nabla u(t)||_{L^2(B(x^*,\frac{T^*-t}{ \delta_*}))}}{(T^*-t)^{1/2}}\right) \le  C^*,
\end{align}
fore some $(x_*, T_*)  \in  \mathbb R^2 , \, \delta_* \in (0,1)$ and $C^*>0$. \\
Then, either $u \equiv 0$ or $u$ can be extended to a function (still denoted by $u$) defined in
\begin{equation}\label{16}
\{(x,t)|    t<T_0+d_0 (x-x^*)  \}\supset \mathcal{C}_{x_*,T_*,\delta_*}  \mbox{ by } u(x,t)=  e^{i\theta_0} \kappa_0 
 \frac{(1-d_0^2)^{\frac{1}{p-1}}}{\left(  T_0-t+ d_0 (x-x^*)\right)^\frac{2}{p-1}},
\end{equation}
for some $T_0  \ge  T^*,\, d_0 \in [-\delta_*, \delta_*]$ and $\theta_0 \in   \mathbb R$, where $  \kappa_0$ defined in (\ref{defk}).
\end{theor}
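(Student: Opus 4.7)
The plan is to work in the self-similar variables (\ref{trans_auto}) centered at $(x^*, T^*)$, writing $w = w_{x^*, T^*}$. The bound (\ref{15}) translates exactly into a uniform estimate $\sup_{s \in \mathbb{R}} \|(w(s), \partial_s w(s))\|_{\mathcal{H}} \le C$ (after suitably restricting the weighted integral to the sub-cone $|y|\le 1/\delta_*$ if needed), so $w$ becomes a global bounded solution of (\ref{equa}). The first step is to observe that the Antonini--Merle computation for the Lyapunov functional $E$ in (\ref{Lyupunov}) carries over verbatim to the complex case, yielding
\begin{equation*}
\frac{d}{ds} E(w(s), \partial_s w(s)) = -c \int_{-1}^1 (1-y^2)\,|\partial_s w(y,s)|^2 \rho(y)\, dy \le 0
\end{equation*}
for some $c>0$. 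Combined with the uniform $\mathcal{H}$ bound and the Sobolev embedding $\mathcal{H}_0 \hookrightarrow L^{p+1}_\rho$, $E$ admits finite limits $E_\pm$ as $s\to\pm\infty$, and the total dissipation $\int_{\mathbb{R}} \int_{-1}^1 (1-y^2)|\partial_s w|^2 \rho\, dy\, ds = E_--E_+$ is finite.

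\textbf{Convergence to stationary solutions and rigidity.} Using this dissipation bound together with local regularity for (\ref{equa}), a standard compactness argument yields subsequential convergence $(w(s_n), \partial_s w(s_n)) \to (W_\pm, 0)$ in $\mathcal{H}$ as $s_n\to\pm\infty$, with each $W_\pm$ a stationary solution of (\ref{equa}). By the classification of stationary solutions in $\mathcal{H}$ established in \cite{<3}, $W_\pm \in \{0\}\cup \{e^{i\theta}\kappa(d,\cdot) : d\in(-1,1),\ \theta\in\mathbb{R}\}$. I would then dichotomize. If $W_- = 0$, then $E_-=0$; since $E(e^{i\theta}\kappa(d,\cdot),0) = E(\kappa_0,0) > 0$ and $E$ is non-increasing, we must also have $W_+=0$ and $E_+=0$, which forces $\partial_s w\equiv 0$ through the dissipation identity; $w$ is then a stationary solution of zero energy, hence $w\equiv 0$ and $u\equiv 0$. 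If $W_-\neq 0$, the approach is modulation around the soliton family: decompose $w(s) = e^{i\theta(s)}\kappa(d(s),\cdot) + q(s)$, where $q(s)$ is orthogonal to the two-dimensional null space of the linearized operator, generated by $\partial_d \kappa(d,\cdot)$ and $i\kappa(d,\cdot)$. The uniform $\mathcal{H}$-bound, coercivity of $E$ on the orthogonal complement, and the dissipation control of $\partial_s w$ force $q(s)\to 0$ and $(d(s),\theta(s))$ to limits as $s\to\pm\infty$; a backward-forward uniqueness argument on the (un)stable manifold of the soliton then upgrades this to $q\equiv 0$ and $(d(s),\theta(s))\equiv(d_0,\theta_0)$. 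Unwinding (\ref{trans_auto}) yields precisely the expression (\ref{16}), and the inclusion $\{t<T_0+d_0(x-x^*)\}\supset \mathcal{C}_{x_*,T_*,\delta_*}$ follows from $|d_0|\le\delta_*$, which itself is forced by the requirement that the singular line of $\kappa(d_0,y) = \kappa_0(1-d_0^2)^{1/(p-1)}(1+d_0 y)^{-2/(p-1)}$ lie outside the original cone.

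\textbf{Main obstacle.} The delicate point is the rigidity step in the non-trivial case $W_-\neq 0$. Compared to the real-valued setting of \cite{MR2362418}, the soliton family is now two-parameter, so the linearized operator around $e^{i\theta}\kappa(d,\cdot)$ carries an extra null direction along phase rotations $i\kappa(d,\cdot)$ that is absent in the real case. Setting up the modulation so that both parameters $d(s)$ and $\theta(s)$ are controlled, and verifying that the remaining orthogonality conditions still yield coercivity of the second variation of $E$, is the technical heart of the argument; this is where the spectral analysis from \cite{<3} is indispensable and where the adaptation from the real case is nontrivial.
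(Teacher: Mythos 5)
Your reduction to self-similar variables centred at $(x^*,T^*)$ is fine (it is exactly the equivalence between Theorem \ref{2} and Theorem \ref{2'}), and the case $W_-=0$ is handled correctly in spirit. The fatal step is the rigidity claim in the case $W_-\neq 0$: you assert that modulation plus a ``backward--forward uniqueness argument'' forces $q\equiv 0$ and $(d(s),\theta(s))\equiv(d_0,\theta_0)$, i.e.\ that $w$ is exactly stationary. This is false, and the statement of the theorem itself tells you why: the admissible limits in (\ref{16}) have $T_0\ge T^*$, and for $T_0>T^*$ the corresponding $w_{x^*,T^*}$ is the explicit \emph{non-stationary} solution $e^{i\theta_0}\kappa_0(1-d_0^2)^{1/(p-1)}\left((T_0-T^*)e^s+1+d_0y\right)^{-2/(p-1)}$, a heteroclinic orbit connecting $e^{i\theta_0}\kappa(d_0,\cdot)$ at $s=-\infty$ to $0$ at $s=+\infty$. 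It satisfies all your hypotheses and has $W_-\neq 0$, yet $q\not\equiv 0$. The mechanism that breaks in your argument is the claimed coercivity of the second variation of $E$ modulo the null directions: $e^{i\theta}\kappa(d,\cdot)$ is a saddle for the self-similar flow (there is a genuine unstable direction, corresponding to shifting the blow-up time), so energy dissipation only gives $\int_{\mathbb R}\int|\partial_s w|^2\rho\,(1-y^2)^{-1}dy\,ds<\infty$, not $\partial_s w\equiv 0$; indeed $E_-=E(\kappa_0,0)>0=E_+$ on the connecting orbit.

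The paper takes a different route designed around precisely this obstruction. It first extends $u$ to its maximal influence domain $\{t<T(x)\}$ and splits into two cases. If $T\equiv+\infty$, the uniform bound $E(w_{0,T}(s))\le E(\kappa_0,0)$ (from the classification of limits as $s\to-\infty$) combined with the $L^{p+1}$ estimate of Proposition \ref{3.1} (ii) gives $\int_{|x|\le (T-t)/2}|u|^{p+1}\,dx\le C(T-t)^{1-2(p+1)/(p-1)}\to 0$ as $T\to\infty$, hence $u\equiv 0$. If $T<+\infty$ everywhere, it locates a non-characteristic point $x_0$ by a geometric construction and recentres the self-similar variables at the \emph{actual} blow-up point $(x_0,T(x_0))$; there the trapping result of \cite{<3} gives $E(w_{x_0}(s))\to E(\kappa_0,0)$ as $s\to+\infty$, while the $s\to-\infty$ analysis gives $E\le E(\kappa_0,0)$, so $E$ is constant, $\partial_s w_{x_0}\equiv 0$, and only then is $w_{x_0}$ exactly stationary. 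A propagation argument (Corollaries \ref{3.8}, \ref{3.9} and Step 3) then shows the blow-up curve is a straight line, $I_0=\mathbb R$, and $u$ is globally the explicit solution, with $T_0=T(x_*)\ge T^*$ absorbing exactly the discrepancy your argument cannot see. To salvage your approach you would have to replace ``$w$ is stationary'' by ``$w$ lies on the one-parameter family of explicit connecting orbits,'' which in effect requires the recentering at the true blow-up time anyway.
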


Applying the self-similar variables' transformation (\ref{trans_auto}), we get this equivalent Theorem:
\begin{theor}\label{2'}{\bf (A Liouville Theorem for equation (\ref{equa}))}
Consider $w(y,s)$ a solution to equation (\ref{equa}) defined for all $(y,s)  \in (- \frac{1}{ \delta_*},  \frac{1}{ \delta_*})  \times  \mathbb R$ such that for all $s \in   \mathbb R$,
\begin{equation}\label{17}
||w_{x_0}(s)||_{H^1(- \frac{1}{ \delta_*},  \frac{1}{ \delta_*})}+||\partial_s w_{x_0}(s)||_{L^2(- \frac{1}{ \delta_*},  \frac{1}{ \delta_*})}\le C^*
\end{equation}
fore some $ \delta_* \in (0,1)$ and $C^*>0$. Then, either $w \equiv 0$ or $w$ can be extended to a function (still denoted by $w$) defined in
\begin{equation}\label{17}
\{(y,s)|  -1-T_0 e^s <d_0 y \} \supset \left(- \frac{1}{ \delta_*} ,  \frac{1}{ \delta_*}\right) \times  \mathbb R  \mbox{ by } w(y,s)=  e^{i\theta_0} \kappa_0 
 \frac{(1-d_0^2)^{\frac{1}{p-1}}}{\left( 1+ T_0 e^s+d_0 y\right)^\frac{2}{p-1}},
\end{equation}
for some $T_0  \ge  T^*,\, d_0 \in [ -\delta_*, \delta_*]$ and $\theta_0 \in   \mathbb R$, where $  \kappa_0$ defined in (\ref{defk}).
\end{theor}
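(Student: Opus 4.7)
The plan is to adapt the real-valued Liouville theorem of Merle--Zaag to the complex setting, carefully tracking the additional $U(1)$ phase symmetry. I work directly with the self-similar formulation of Theorem~\ref{2'}. Since $w$ is defined on all of $\mathbb R$ in time and on an interval strictly containing $(-1,1)$ in space, and the weight $\rho$ vanishes at $|y|=1$, the Lyapunov functional $E$ of (\ref{Lyupunov}) applies on $(-1,1)$ without boundary terms. The Antonini--Merle dissipation identity extends to the complex case (the potential $\tfrac{1}{p+1}|u|^{p+1}$ being $U(1)$-invariant), and together with the uniform bound (\ref{17}) it forces $E(w(s),\partial_s w(s))$ to be non-increasing and bounded. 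Consequently it admits finite limits $E_\pm$ as $s\to\pm\infty$, and the total dissipation $\int_{\mathbb R}\int_{-1}^{1}|\partial_s w|^2(1-y^2)^{-1}\rho\,dy\,ds$ is finite.

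First, I extract stationary limits at $\pm\infty$. Along any sequence $s_n\to\pm\infty$, the finite dissipation together with the $H^1$-bound produces, via Rellich compactness, subsequential limits $w_\pm$ with $\partial_s w_\pm\equiv 0$---stationary solutions of (\ref{equa}) in $\mathcal H_0$. The classification of such solutions, established in \cite{<3}, asserts that each is either $0$ or of the form $e^{i\theta_\pm}\kappa(d_\pm,\cdot)$. All nontrivial stationary solutions share the common energy $E(\kappa_0,0)>0=E(0,0)$, so monotonicity of $E$ rules out mixed cases: either $w\equiv 0$, or both endpoints yield nontrivial profiles.

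The heart of the proof is the rigidity step. In the nontrivial case, I modulate on the profile manifold: writing $w(y,s)=e^{i\theta(s)}[\kappa(d(s),y)+q(y,s)]$ with two real orthogonality conditions on $q$ that pin down $(d(s),\theta(s))$ uniquely near the manifold, the remainder $q$ satisfies a linearized equation around $\kappa(d(s),\cdot)$. The two neutral directions $\partial_d\kappa$ and $i\kappa$ of this linearized operator are absorbed by the two modulation parameters; the remaining spectrum consists of exponentially expanding and exponentially contracting modes coming from the second-order time structure. Because $w$ is bounded on all of $\mathbb R$, projections of $q$ onto the expanding modes must vanish (otherwise $q$ would blow up as $s\to+\infty$) and likewise for the contracting modes (as $s\to-\infty$). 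Combined with the orthogonality conditions this forces $q\equiv 0$, so $w(y,s)=e^{i\theta(s)}\kappa(d(s),y)$ for every $s\in\mathbb R$. Substituting this ansatz back into (\ref{equa}) yields ODEs for $(d,\theta)$ which integrate explicitly to the three-parameter family in (\ref{17}), with the restriction $|d_0|\le\delta_*$ imposed by the requirement that $1+T_0e^s+d_0y>0$ for all $|y|<1/\delta_*$ and all $s\in\mathbb R$.

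The main obstacle is the rigidity step in the complex setting. Compared to the real case of Merle--Zaag, the new feature is the genuine additional neutral direction $i\kappa$ from the $U(1)$ symmetry, which forces the modulation to carry two parameters and the center subspace of the linearized operator to be two-dimensional. Running the spectral projection argument globally in $s\in\mathbb R$ rather than only asymptotically will rely critically on the trapping estimates of \cite{<3}, which provide uniform control of the distance of $w(s)$ to the full profile manifold $\{e^{i\theta}\kappa(d,\cdot)\}$ and thereby allow the perturbative analysis to be executed throughout time.
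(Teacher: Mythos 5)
Your proposal attempts a direct ``connecting orbit'' argument in self-similar variables (energy monotonicity, classification of the $\omega$- and $\alpha$-limits among stationary solutions, then spectral rigidity forcing $w$ onto the profile manifold), and this route has a fatal flaw: the conclusion of Theorem \ref{2'} itself contains bounded, genuinely non-stationary solutions. Indeed, for $T_0>0$ the function $w(y,s)=e^{i\theta_0}\kappa_0(1-d_0^2)^{\frac{1}{p-1}}(1+T_0e^s+d_0y)^{-\frac{2}{p-1}}$ connects $e^{i\theta_0}\kappa(d_0,\cdot)$ at $s=-\infty$ to $0$ at $s=+\infty$; it is \emph{not} of the form $e^{i\theta(s)}\kappa(d(s),y)$, it has a nonzero component along the expanding mode of the linearization around $\kappa(d_0,\cdot)$, and yet it stays bounded for all $s\in\mathbb R$ because after leaving the neighborhood of the stationary solution it decays to zero rather than blowing up. This simultaneously refutes your claim that monotonicity of $E$ ``rules out mixed cases'' (a heteroclinic from energy $E(\kappa_0,0)>0$ down to energy $0$ is perfectly compatible with $E$ non-increasing), your claim that boundedness on $\mathbb R$ kills the expanding modes, and your final assertion that the modulation ansatz $w=e^{i\theta(s)}\kappa(d(s),\cdot)$ integrates to the full three-parameter family. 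As written, your argument would prove that every bounded solution is stationary, which is strictly stronger than the theorem and false.

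The paper takes a different and essentially unavoidable route: Theorem \ref{2'} is obtained from Theorem \ref{2} by undoing the self-similar change of variables (\ref{trans_auto}), and Theorem \ref{2} is proved by extending $u$ to its maximal influence domain $D_u=\{t<T(x)\}$ and splitting into two cases. If $T\equiv+\infty$, the uniform bound $E(w_{0,T}(s))\le E(\kappa_0)$ and the $L^{p+1}$ estimate of Proposition \ref{3.1} give $\int|u(x,t)|^{p+1}dx\le C(T-t)^{-\frac{2(p+1)}{p-1}+1}\to 0$ as $T\to\infty$, whence $u\equiv 0$. If $T<+\infty$ everywhere, one locates a non-characteristic point, shows that $w_{x_0}$ is \emph{exactly} a stationary solution $e^{i\theta_0}\kappa(d_0,\cdot)$ there, proves that the set of non-characteristic points is all of $\mathbb R$ so the blow-up curve is a straight line of slope $d_0$, and reads off the explicit formula (\ref{16}); the non-stationary solutions of the conclusion arise precisely from blow-up at a later time $T_0>T^*$. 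The classification of stationary solutions and the trapping result of \cite{<3} that you invoke are indeed used, but only after the problem has been reduced, via the geometry of the blow-up set, to points where $w$ is globally trapped near the manifold --- a reduction your proposal does not supply.
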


This paper is organized as follows:\\
- In section 2, we give the proof of Theorem \ref{1} assuming the Liouville Theorem.\\
- In section 3, we state briefly some previous results for the complex-valued solution of (\ref{waveq}), then give the outline of the proof of the Liouville Theorem.\\
\section{Regularity of the blow-up curve}
In this section, we give the outline of the proof of Theorem \ref{1}. 
In order to do so, we proceed in 4 steps:\\
- In Step $1$, we assume Theorem \ref{2}, and we study the differentiability of the blow-up curve at a given non characteristic point.\\
- In Step $2$, we give two geometrical results for a non characteristic point.\\
- In Step $3$, we use the results of the two previous parts to show that $I_0$ is open and that $T$ is $C^1$ on this set.\\
- In Step $4$, adapting the strategy of Nouaili \cite{N}, we refine the result of Step $3$ and prove that $T$ is of class $C^{1, \mu_0}$ and  the phase $\theta$ is of class $C^{ \mu_0}$ on $I_0$.
\bigskip

\noindent $\rightsquigarrow$ {\it Step $1$: Differentiability of the blow-up curve at a given non characteristic point.}\\
In this step, we give the recall the result of the real case page $60$ in \cite{MR2415473} which remains valid in the complex case with no change. For the reader convenience we introduce the result and we give the outline of the proof.
\begin{prop}\label{2.1}{\bf (Differentiability of the blow-up curve at a given non characteristic point) } If $x_0$ is a non characteristic point, then $T(x)$ is differentiable at $x_0$ and $T'(x_0) = d(x_0)$ where $d(x_0)$ is such that (\ref{10}) holds.
\end{prop}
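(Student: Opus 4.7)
The plan is to rescale $u$ around $(x_0,T(x_0))$, pass to a subsequential limit, and use the Liouville Theorem \ref{2} together with the profile convergence (\ref{10}) to identify the limit as the explicit self-similar blow-up solution whose blow-up curve is the line with slope $d(x_0)$. After translating in time, we may suppose $T(x_0)=0$. The 1-Lipschitz property of $T$ and the cone condition (\ref{4}) force the difference quotient $\ell(x):=T(x)/(x-x_0)$ to stay in a compact subset of $[-1,1]$; by Bolzano--Weierstrass, it suffices to show that for every sequence $x_n\to x_0$ along which $\ell(x_n)$ converges to some $\ell$, one has $\ell=d(x_0)$.

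Given such a sequence, set $h_n:=x_n-x_0\to 0$. By the scale-invariance of (\ref{waveq}), define
$$v_n(X,T):=|h_n|^{2/(p-1)}u(x_0+h_n X,\,h_n T),$$
which solves (\ref{waveq}) and blows up on the curve $\widetilde T_n(X):=h_n^{-1}T(x_0+h_n X)$, a 1-Lipschitz function with $\widetilde T_n(0)=0$ and $\widetilde T_n(1)=\ell(x_n)\to \ell$. The non-characteristic cone at $(x_0,0)$ rescales to a uniform cone of aperture $\delta_0$ at $(0,0)$ for every $v_n$. A direct computation shows that the self-similar change of variables (\ref{trans_auto}) applied to $v_n$ around $(0,0)$ reproduces $w_{x_0}$ evaluated at the shifted self-similar time $s-\log|h_n|$; hence (\ref{10}) implies that this profile is within $O(|h_n|^{\mu_0})$ of the stationary profile $e^{i\theta(x_0)}\kappa(d(x_0),\cdot)$ in $\mathcal H$.

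Classical finite-speed-of-propagation energy bounds for the wave equation then give uniform estimates of the type (\ref{15}) on the family $(v_n)$. Extracting a subsequential limit $v_\infty$ that solves (\ref{waveq}) on the cone at $(0,0)$ and inherits the bound (\ref{15}), and passing to the limit in the self-similar identity, one sees that the self-similar profile of $v_\infty$ at $(0,0)$ equals $e^{i\theta(x_0)}\kappa(d(x_0),\cdot)$; in particular $v_\infty\not\equiv 0$. The Liouville Theorem \ref{2} then forces $v_\infty$ to be the explicit self-similar solution (\ref{16}) with parameters $T_0=0$, $d_0=d(x_0)$, $\theta_0=\theta(x_0)$, whose blow-up curve is exactly $T=d(x_0)X$. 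Consequently $\widetilde T_n\to d(x_0)\cdot X$ locally uniformly, so $\ell=\lim \widetilde T_n(1)=d(x_0)$, as required.

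The main obstacle is the last limit-passage step: the convergence $v_n\to v_\infty$ must be strong enough that the rescaled blow-up curves $\widetilde T_n$ converge to the actual blow-up curve of $v_\infty$, not to some strictly larger 1-Lipschitz curve in the rescaled plane. This is expected to follow from the uniform cone control at $(0,0)$ combined with upper semi-continuity of the blow-up time under strong limits, both of which are standard consequences of the finite-speed-of-propagation energy estimates available for semilinear wave equations.
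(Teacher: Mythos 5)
Your blow-up--limit strategy (rescale by $h_n=x_n-x_0$, extract $v_\infty$, identify it through Theorem \ref{2} and the profile (\ref{10}), read off the slope) is a genuinely different route from the paper's, which never extracts a limit of rescaled solutions of (\ref{waveq}): it argues by contradiction directly in self-similar variables, showing via trapping and continuity of the flow for (\ref{equa}) (Corollary \ref{2.4}) that $w_{x_n}(\sigma_n')$ approaches one of the explicit \emph{non-stationary} connecting solutions $w_\pm$, and then kills each sign of $\lambda$ separately. The problem with your version is the last step. The half $\liminf \widetilde T_n(1)\ge d(x_0)$ is indeed a continuity-of-the-flow statement (lower semi-continuity of the existence time), granted strong $H^1\times L^2$ convergence of $v_n$ to $v_\infty$ on a wide enough spatial slice --- which is itself not free, since (\ref{10}) only gives convergence on $|y|<1$ while you need it on sections of the $\delta_0$-cone; supplying that is exactly the role of Corollary \ref{2.4} and its covering argument, and the uniform bounds (\ref{15}) for $v_n$ are the blow-up rate estimates at a non-characteristic point from \cite{<3}, not a ``classical'' energy bound. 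These are repairable citations, not conceptual errors.

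The genuine gap is the other half: your ``upper semi-continuity of the blow-up time under strong limits'' is not a standard consequence of finite speed of propagation or energy estimates. For a semilinear wave equation the maximal existence time is in general only \emph{lower} semi-continuous in the data; nothing in local well-posedness forbids a sequence of solutions from surviving strictly longer than their limit, even when the convergence is strong on every compact subset of the limit's domain, so your argument as written does not exclude $\lim T(x_n)/x_n>d(x_0)$. Ruling this scenario out is precisely the hard case ($\lambda>0$) of the proposition, and it requires a quantitative blow-up criterion: in the paper, Theorem $2$, page $1147$ of Antonini and Merle, applied to $w_{x_n}$, which is close to the blowing-up solution $w_-(\sigma^*)$ and hence satisfies the criterion, forcing $w_{x_n}$ to blow up in finite self-similar time --- contradicting the fact that $w_{x_n}$ is defined on $(-1,1)\times[-\log T(x_n),+\infty)$ by the $1$-Lipschitz property of $T$ alone. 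You need to invoke such a criterion (or an equivalent rigidity statement) to close the argument.
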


\begin{proof}[Proof of Proposition \ref{2.1}] From translation invariance, we can assume that $x_0 =T(x_0)=0$, we assume also that $\theta(0)=1$.
In order to prove that $T(x)$ is differentiable when $x = 0$ and that $T'(0) = d(0)$, we proceed by contradiction. From the fact that $T(x)$ is 1-Lipschitz, we assume that there is a sequence $x_n$ such that
 \begin{equation} \label{30}
x_n \rightarrow  0 \mbox{ and } T(x_n)  \rightarrow  d(0) + \lambda  \mbox{ with }  \lambda \neq 0  \mbox{ as }  n   \rightarrow \infty. 
  \end{equation}
Up to extracting a subsequence and to considering $u(-x, t)$ (also solution to (\ref{waveq})), we can assume that $x_n > 0$.\\
We recall the following
\begin{cor}\label{2.4}  Let $\delta_1 =\frac{1+\delta_0'}{2}$. For $\sigma_n'= -\log \left(\frac{\delta_1 (T(x_n)+\delta_0' x_n)}{\delta_1-\delta_0'} \right)$, we have
 \begin{eqnarray*}\label{}
 \Big|\Big|\begin{pmatrix} w_{x_n}(\sigma_n')\\\partial_s w_{x_n}(\sigma_n') \end{pmatrix} -\begin{pmatrix} w_{\pm}(\sigma^*)\\\partial_s w_{\pm}(\sigma^*) \end{pmatrix} \Big|\Big|_{H^1\times L^2(-\frac{1}{\delta_1},\frac{1}{\delta_1}) } \rightarrow 0   \mbox{ as }  n   \rightarrow \infty. 
 \end{eqnarray*}
  where $\pm=- sgn \lambda$,
   \begin{equation*}
\sigma^*=\log \left(\frac{|\lambda| (\delta_1-\delta_0)}{\delta_1(\lambda+d(0)+\delta_0')} \right) \mbox{ and } w_\pm(y,s)=  \kappa_0 
 \frac{(1-d(0)^2)^{\frac{1}{p-1}}}{\left( 1\pm e^s+d(0) y\right)^\frac{2}{p-1}},
 \end{equation*}
  is a solution to (\ref{equa}).
\end{cor}
\begin{proof} 
The proof is the same as in the real (see page $63$ in \cite{MR2415473}), one can adapte it without difficulty.
\end{proof} 
\noindent We discuss within the sign of $\lambda$:\\
\noindent{\bf Case $\lambda < 0$:} Here, we will reach a contradiction using Corollary \ref{2.4} and the fact that $u(x, t)$ cannot be extended beyond its maximal influence domain $D_u$ defined by (\ref{domaine-de-definition}).

\noindent{\bf Case $\lambda  > 0$:} Here, a contradiction follows from the fact that $w_{x_n} (y, s)$ exists for all $(y, s) \in (-1, 1) \times [- \log T (x_n), +\infty)$ and satisfies a blow-up criterion (given in Theorem $2$ page $1147$ in the paper of Antonini and Merle \cite{AM01}, which is available also for a complex-valued solution) at the same time.
\\Thus, (\ref{30}) does not hold and $T(x)$ is differentiable at $x = 0$ with $T'(0) = d(0)$. This
concludes the proof of Proposition \ref{2.1}.
\end{proof} 

\noindent $\rightsquigarrow$ {\it Step $2$: Openness of the set of $x$ such that (\ref{10}) holds}\\
We have from the dynamical study in self-similar variables (\ref{trans_auto}) we have the following
\begin{lem}\label{2.6}  {\bf (Convergence in self-similar variables for $x$ close to $0$)} For all $\epsilon > 0$, there exists $\eta$ such that if $|x| < \eta$ and $x$ is non characteristic, then, (\ref{10}) holds for $w_x$ with $|d(x)-d(0)|\le \epsilon$ and $|\theta(x)-\theta(0)|\le \epsilon$.
\end{lem}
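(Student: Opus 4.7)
The plan is to combine the exponential convergence (\ref{10}) at $x=0$ with a continuous dependence estimate in self-similar variables with respect to the base point, and then to conclude via the trapping property of the stationary family $\{e^{i\theta}\kappa(d,\cdot)\}_{d\in(-1,1),\,\theta\in\mathbb{R}}$ already proved in \cite{<3}.

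First I will invoke the trapping result from \cite{<3}: given $\epsilon>0$, there exists $\epsilon_1=\epsilon_1(\epsilon)>0$ with the following property. Any solution $\tilde w$ of (\ref{equa}) defined on $(-1,1)\times[s^*,+\infty)$ whose data at some time $s^*$ lies within $\mathcal{H}$-distance $\epsilon_1$ of $e^{i\theta(0)}(\kappa(d(0),\cdot),0)$ satisfies (\ref{10}) with limit parameters $(\tilde d,\tilde\theta)$ obeying $|\tilde d-d(0)|\le\epsilon$ and $|\tilde\theta-\theta(0)|\le\epsilon$. Applying (\ref{10}) at $x_0=0$, I fix a time $s^*$ large enough that
\[
\left\|\begin{pmatrix}w_0(s^*)\\\partial_s w_0(s^*)\end{pmatrix}-e^{i\theta(0)}\begin{pmatrix}\kappa(d(0),\cdot)\\0\end{pmatrix}\right\|_{\mathcal{H}}\le \frac{\epsilon_1}{2}.
\]

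Second, I will establish a continuous dependence on the base point: there exists $\eta>0$ such that for every non characteristic $x$ with $|x|<\eta$,
\[
\left\|\begin{pmatrix}w_x(s^*)\\\partial_s w_x(s^*)\end{pmatrix}-\begin{pmatrix}w_0(s^*)\\\partial_s w_0(s^*)\end{pmatrix}\right\|_{\mathcal{H}}\le \frac{\epsilon_1}{2}.
\]
Since $T$ is $1$-Lipschitz, $|T(x)-T(0)|\le|x|$, so for $|x|$ small both self-similar transformations (\ref{trans_auto}) with vertices $(0,0)$ and $(x,T(x))$ are well defined at time $s^*$ and describe the restriction of the same function $u$ to two close slices lying in its domain of definition. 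The estimate reduces to the continuity of $(u,u_t)$ in $H^1_{loc}\times L^2_{loc}$ between these two close time slices, combined with the smooth dependence of the change of variables (\ref{trans_auto}) on the base point.

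Combining the two steps, for every non characteristic $x$ with $|x|<\eta$ the data $(w_x(s^*),\partial_s w_x(s^*))$ lies within $\mathcal{H}$-distance $\epsilon_1$ of $e^{i\theta(0)}(\kappa(d(0),\cdot),0)$, and the trapping lemma of Step~1 yields (\ref{10}) for $w_x$ with limit parameters $(d(x),\theta(x))$ satisfying the desired bounds. I expect the main obstacle to be the second step: the norm $\|\cdot\|_{\mathcal{H}}$ carries the weight $\rho(y)=(1-y^2)^{2/(p-1)}$, which degenerates precisely at the boundary of the backward light cone, so a naive comparison between $w_x$ and $w_0$ near $y=\pm1$ is delicate. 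However, the non characteristicity of $x=0$ confines all relevant slices of $u$ inside the strictly smaller cone $\mathcal{C}_{0,0,\delta_0}$, hence comparison is carried out on a slightly shrunk interval away from the weight's degeneracy, while the remaining tail near $y=\pm1$ is controlled by the uniform $\mathcal{H}$-boundedness of $w_0(s^*)$.
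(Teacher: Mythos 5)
Your proposal is correct and is essentially the paper's own argument: the paper simply defers to the real-case proof of Merle and Zaag (page 66 of \cite{MR2415473}), which is exactly this two-step scheme of (i) the trapping property of the stationary family, here supplied for the complex equation by the trapping result of \cite{<3} with the two parameters $(d,\theta)$, and (ii) continuous dependence of $(w_x(s^*),\partial_s w_x(s^*))$ on the base point $x$ at a fixed large time $s^*$. Your closing remark about the degenerate weight $\rho$ near $y=\pm1$ correctly identifies the only delicate point and resolves it the same way the real-case proof does.
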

\begin{proof} 
We proceed as in the real case in page $66$ in \cite{MR2415473}, considering our two pamareters $d$ and $\theta$ instead of one parameter $d$  in the real case.
\end{proof} 
We claim:
\begin{lem}\label{2.7} {\bf (The slope of $T (x)$ around $0$ is less than $(1 + |d(0)|)/2$)} It holds that
 \begin{equation}
  \forall x, y  \in [-\frac{\eta_0}{10},\frac{\eta_0}{10}] ,  |T(x)-T(y)|\le 1+|d(0)| |x-y|. 
  \end{equation}
\end{lem}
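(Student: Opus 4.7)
Set $\delta_1 = (1+|d(0)|)/2$, which lies strictly between $|d(0)|$ and $1$. The plan is to reduce the lemma to a uniform backward-cone property near $0$: for some $\eta_0 > 0$ and every $x \in [-\eta_0, \eta_0]$,
\[
T(\xi) \ge T(x) - \delta_1\, |\xi - x| \quad \text{for all } \xi \text{ with } |\xi - x| \le \eta_0.
\]
Once this cone property is available, for any $x, y \in [-\eta_0/10, \eta_0/10]$ one applies it at $x$ with $\xi = y$ and at $y$ with $\xi = x$, obtaining $T(y) - T(x) \ge -\delta_1|x-y|$ and $T(x) - T(y) \ge -\delta_1|x-y|$; hence $|T(x) - T(y)| \le \delta_1 |x-y|$, the desired estimate (up to the factor $1/2$ suggested by the title of the lemma).

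I would first establish the cone at $x = 0$: by Proposition \ref{2.1}, $T$ is differentiable at $0$ with $T'(0) = d(0)$, so $T(\xi) = T(0) + d(0)\xi + o(\xi)$ as $\xi \to 0$. Since $\delta_1 > |d(0)|$, a short contradiction argument produces some $\eta_*(0) > 0$ with $T(\xi) \ge T(0) - \delta_1 |\xi|$ for $|\xi| \le \eta_*(0)$: otherwise $\xi_n \to 0$ with $(T(\xi_n) - T(0))/|\xi_n| < -\delta_1$ would, in the limit, force $|d(0)| \ge \delta_1$. The same argument extends to every non-characteristic $x$ in a neighborhood of $0$ via Lemma \ref{2.6} applied with $\epsilon = (1 - |d(0)|)/4$, which yields $|d(x)| \le |d(0)| + \epsilon < \delta_1$ on some $(-\eta_1, \eta_1)$, so that Proposition \ref{2.1} at $x$ gives a cone at $x$ with slope $\delta_1$ on a neighborhood of some size $\eta_*(x) > 0$.

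The main obstacle is to make $\eta_*(x)$ uniform in $x$: pointwise differentiability at each individual point alone only produces a neighborhood whose size might collapse as $x$ varies. I would obtain the required uniformity from the quantitative, uniform-in-$x$ convergence $w_x(s) \to e^{i\theta(x)}\kappa(d(x),\cdot)$ in $\mathcal{H}$ furnished by Lemma \ref{2.6} together with the exponential rate in (\ref{10}), via a compactness/rescaling argument: a hypothetical sequence of non-characteristic $x_n \to 0$ with witnesses $\xi_n$, $|\xi_n - x_n| \to 0$, violating the cone property at $x_n$, would --- transported into the self-similar variables centered at $(x_n, T(x_n))$ --- eventually contradict the convergence to a profile whose parameter $d(x_n) \to d(0)$ while $\delta_1 > |d(0)|$. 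Any characteristic points in $[-\eta_0/10, \eta_0/10]$ are handled by a continuity argument: since $T$ is $1$-Lipschitz and the cone inequality passes to the limit, approximating such $x$ by nearby non-characteristic points (whose existence we expect from the trapping result of \cite{<3} together with Lemma \ref{2.6}) transfers the cone property to $x$, and the double-cone conclusion of the first paragraph then finishes the proof.
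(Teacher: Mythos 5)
Your reduction of the lemma to a two-sided cone property (apply $T(\xi)\ge T(x)-\delta_1|\xi-x|$ at $x$ with $\xi=y$ and at $y$ with $\xi=x$) is fine, and you correctly spot the typo in the displayed inequality. But the way you establish the cone property at points $x\neq 0$ is circular. You obtain the cone at $x$ from Proposition \ref{2.1} and Lemma \ref{2.6} applied \emph{at $x$}, which requires $x$ to be non-characteristic --- yet the whole purpose of Lemma \ref{2.7} in the paper's architecture is precisely to prove, in Step 3, that the points of $[-\frac{\eta_0}{20},\frac{\eta_0}{20}]$ \emph{are} non-characteristic. At this stage of the argument the only point known to be non-characteristic is $0$ itself; a priori every other point of a punctured neighbourhood of $0$ could be characteristic, so your fallback of ``approximating characteristic $x$ by nearby non-characteristic points'' has no supply of approximants (neither the trapping result of \cite{<3} nor Lemma \ref{2.6} produces non-characteristic points near a given one --- they only describe points already assumed non-characteristic). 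Moreover, at a characteristic point the blow-up curve develops kinks of slope $\pm 1$, so ``the cone inequality passes to the limit'' is exactly the non-trivial assertion to be proved, not a continuity formality; if it were automatic, the lemma would rule out characteristic points by soft arguments alone, which is false for a general $1$-Lipschitz function differentiable at $0$. The uniformity problem you flag for $\eta_*(x)$ is real, but the compactness/rescaling fix you sketch inherits the same circularity, since it again centres the self-similar variables at the points $x_n$ and invokes their convergence to a one-soliton profile.

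This is also structurally opposite to what the paper does. The paper's proof is the geometric construction of page 67 of \cite{MR2415473}: it uses only the $1$-Lipschitz property of $T$ together with the information available at the \emph{single} point $0$ (the differentiability $T'(0)=d(0)$ with $|d(0)|<1$ from Proposition \ref{2.1}, and the non-characteristic cone there), and the paper stresses that it ``never uses the profile of $w$'' at the neighbouring points --- deliberately so, because nothing is yet known about those points. To repair your argument you would have to derive the cone property at $x$ and $y$ from data anchored at $0$ only (e.g.\ the two-sided bound $|T(z)-T(0)-d(0)z|\le \frac{1-|d(0)|}{4}|z|$ on $[-\eta_0,\eta_0]$ combined with light-cone comparisons), rather than from the profile convergence at $x$ and $y$ themselves.
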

\begin{proof} 
 The proof in the real case stay valid without any change in the complex case. In fact, we never use the profile of $w$,we use only a geometrical constuction. For more details see page $67$ in \cite{MR2415473}.
\end{proof} 
\noindent $\rightsquigarrow$ {\it Step $3$: $C^1$ regularity of the blow-up set}\\
Let $x_0$ be a non characteristic point. 
 One can assume that $x_0 = T(x_0) = 0$ from translation invariance. From \cite{<3} and Proposition \ref{2.1}, we know (up to replacing $u(x, t)$ by $-u(x, t)$) that (\ref{10}) holds with some $d(0) \in (-1, 1)$ and $\theta(0) = 1$, and that $T(x)$ is differentiable at $0$ with
 \begin{equation} \label{49}
T'(0) =d(0). 
  \end{equation}

Using Lemma \ref{2.7}, we see that for all $x \in [-\frac{\eta_0}{20} , \frac{\eta_0}{20} ]$, $x$ is non characteristic in the sense (\ref{4}). Using Proposition \ref{2.1}, we see that $T$ is differentiable at $x$ and $T' (x) = d(x)$ where $d(x)$ is such that (\ref{10}) holds for $w_x$. Using Lemma \ref{2.6}, we see from (\ref{49}) that $T'(x) = d(x) \rightarrow d(0) = T'(0)  \mbox{ as } x  \rightarrow 0  \mbox{ and } \theta(x) = 1$.

\medskip

\noindent $\rightsquigarrow$ {\it Step $4$: $C^{1,\mu_0}$ regularity of the Blow-up curve and $C^{\mu_0}$ regularity the phase $\theta$.}\\
In this step, we conclude the proof of Theorem \ref{1}. In order to do so, we use in addition to the techniques used in the real case in \cite{N}, which remains valid in our case, a decomposition into real and imaginary parts in some inequalities, which gives a new information concerning the regularity of the phase.

We introduce the following:
\begin{lem}{\bf (Locally uniform convergence to the blow-up profile)}
 There exist positive $\mu_0=\mu_0(p)$ and $C_0=C_0(p)$ such that for all $x_0 \in \mathcal{R}$, there exist $\delta>0$, $s^*\in \mathbb{R}$, such that for all $X\in (x_0-\delta, x_0+\delta)$ and $s\ge s^*$,
  \begin{eqnarray}\label{}
 \Big|\Big|\begin{pmatrix} w_{x}(s)\\\partial_s w_{x}(s) \end{pmatrix} -e^{i \theta (x_0)}\begin{pmatrix} \kappa(T'(x),.)\\0\end{pmatrix} \Big|\Big|_{\mathcal{ H}}\le C_0 e^{-\mu_0(s-s(x_0))}.
  \end{eqnarray}
\end{lem}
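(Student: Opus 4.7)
The plan is to upgrade the pointwise exponential convergence (\ref{10}) of \cite{<3} to a locally uniform statement around any $x_0 \in I_0$, by combining it with the openness of $I_0$ and the continuity of $x\mapsto (d(x),\theta(x))$ obtained in Step $3$, together with the modulation / linear stability analysis that already underlies the proof of (\ref{10}).

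First, fix $x_0 \in I_0$ and, using Lemma \ref{2.6} and the $C^1$ regularity of $T$ from Step $3$, choose $\delta_1 > 0$ such that $(x_0-\delta_1, x_0+\delta_1) \subset I_0$ and $|d(x) - d(x_0)| + |\theta(x) - \theta(x_0)| \le \varepsilon$ on this interval, with $\varepsilon$ to be chosen small. Recall next the modulation decomposition used to prove (\ref{10}): for every $x\in I_0$ one writes
\begin{equation*}
\begin{pmatrix} w_x(s) \\ \partial_s w_x(s) \end{pmatrix} = e^{i\theta_x(s)}\begin{pmatrix}\kappa(d_x(s),\cdot)\\ 0\end{pmatrix} + q_x(s),
\end{equation*}
where the modulation parameters $(d_x(s), \theta_x(s))$ are chosen so that $q_x(s)$ is orthogonal in $\mathcal{H}$ to the null space of the linearized operator at $e^{i\theta_x(s)}\kappa(d_x(s),\cdot)$. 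Compared with the real case, this null space carries an extra direction $(i\kappa(d,\cdot),0)$ coming from the phase invariance $w\mapsto e^{i\alpha}w$ of (\ref{equa}), which is precisely what produces the additional modulation parameter $\theta_x(s)$. The spectral gap $\mu_0 > 0$ below this enlarged null space, combined with the Lyapunov functional $E$ in (\ref{Lyupunov}), then yields the following linear stability estimate: if at some time $s_0$ one has $\|q_x(s_0)\|_{\mathcal{H}} + |d_x(s_0)-d(x)| + |\theta_x(s_0)-\theta(x)| \le \varepsilon_0$ for a sufficiently small universal $\varepsilon_0$, then for all $s \ge s_0$,
\begin{equation*}
\|q_x(s)\|_{\mathcal{H}} + |d_x(s)-d(x)| + |\theta_x(s)-\theta(x)| \le C\, e^{-\mu_0(s-s_0)}.
\end{equation*}

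The local uniformity then follows from two ingredients. By (\ref{10}) at $x_0$, one can fix $s^* = s^*(x_0)$ so large that the smallness condition above is satisfied at $x=x_0$ with a definite margin. By finite speed of propagation and standard continuous dependence on initial data for (\ref{waveq}), and hence for (\ref{equa}) through the change of variables (\ref{trans_auto}), the map $x\mapsto (w_x(s^*), \partial_s w_x(s^*))$ is continuous from a neighborhood of $x_0$ into $\mathcal{H}$; shrinking $\delta \le \delta_1$, the smallness condition therefore holds for every $x \in (x_0-\delta, x_0+\delta)$ as well. Applying the linear stability estimate above with the universal rate $\mu_0$ yields the claimed bound, after absorbing into $C_0$ the bounded cost of replacing $e^{i\theta_x(s)}\kappa(d_x(s),\cdot)$ by the target $e^{i\theta(x_0)}\kappa(T'(x),\cdot)$ (using $T'(x)=d(x)$ from Step $3$ and the continuity of $\theta$).

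The main obstacle specific to the complex setting is the extra phase direction in the null space: one has to set up a genuinely two-parameter modulation in $(d,\theta)$, verify that the orthogonal complement of the enlarged null space still admits the same spectral gap $\mu_0$ as in the real case of \cite{N}, and control the additional ODE driving $\theta_x(s)$ so that $\theta_x(s)\to \theta(x)$ at the rate $e^{-\mu_0 s}$. Once these points are in place, the remainder of the argument is a direct adaptation of Nouaili's proof to the complex case.
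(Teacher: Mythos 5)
Your overall strategy is the same as the paper's (which simply defers to page 1544 of \cite{N}): fix a self-similar time $s^*$ at which $w_{x_0}$ is well inside a small neighborhood of its limiting profile, use continuity of $x\mapsto (w_x(s^*),\partial_s w_x(s^*))$ in $\mathcal H$ (equivalently, Lemma \ref{2.6}) to propagate that smallness to all $x$ near $x_0$, and then invoke an exponential convergence statement with \emph{universal} constants $C_0,\mu_0$ once the solution is trapped near the profile. The only real difference is that you propose to re-derive the trapping estimate via a two-parameter $(d,\theta)$ modulation and a spectral gap; that is exactly the content of the trapping theorem already proved in \cite{<3} (it is the source of the $\mu_0$ in (\ref{10})), so the paper can and does cite it rather than reprove it. That part of your plan is sound, just redundant.

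There is, however, one step that fails as written: the final claim that the cost of replacing $e^{i\theta_x(s)}\kappa(d_x(s),\cdot)$ by the target $e^{i\theta(x_0)}\kappa(T'(x),\cdot)$ can be ``absorbed into $C_0$.'' Passing from $\theta_x(s)$ to $\theta(x)$ is fine (that discrepancy decays like $e^{-\mu_0 s}$), but passing from $\theta(x)$ to $\theta(x_0)$ introduces the error $\bigl|e^{i\theta(x)}-e^{i\theta(x_0)}\bigr|\,\|\kappa(d(x),\cdot)\|_{\mathcal H_0}$, which is a fixed positive quantity independent of $s$ whenever $\theta(x)\neq\theta(x_0)$; it cannot be dominated by $C_0 e^{-\mu_0(s-s(x_0))}$ as $s\to+\infty$. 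At this stage of the argument $\theta$ is only known to be continuous, not locally constant, so this term does not vanish. The correct conclusion of your argument is the estimate with target $e^{i\theta(X)}\kappa(T'(X),\cdot)$, i.e.\ with the phase evaluated at the running point $X$ rather than frozen at $x_0$ --- which is in fact how the lemma is used later in the paper (in (\ref{op}) the lemma is applied re-centered at $x$, producing $e^{i\theta(x)}$). You should state and prove the lemma with $e^{i\theta(X)}$; only the uniformity of $\delta$, $s^*$, $C_0$, $\mu_0$ in $X$ is needed, not a common phase.
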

\begin{proof} The same idea used in the real case can be adapted to the complex case without any difficulty. It is to use the result of Lemma \ref{2.6} to prove that the convergence in (\ref{10}) is locally uniform with respect to $x_0$. For more details see page $1544$ \cite{N}.
\end{proof}
Translated back to the variables $u(x,t)$, we get the following: 
\begin{lem}
 There exist positive $\mu_0=\mu_0(p)$ and $C_0=C_0(p)$ such that for all $x_0 \in \mathcal{R}$, there exist $\delta>0$, $0<t^*(x_0)<\inf_{|x-x_0|\le \delta}T(X)$, such that for all $X\in (x_0-\delta, x_0+\delta)$ and $t\in [t^*,T(X))$,                                                                                                                                                                                                                               \begin{eqnarray}\label{155}
\underset{|\xi-X|\le \frac{3}{4}(T(X)-t)}{\sup} \Big| u(\xi,t)-   e^{i \theta (x_0)} \kappa_0\frac{(1-T'(X)^2)^\frac{1}{p-1}}{(T(X)-1+ T'(X)(\xi-X))^\frac{2}{p-1}} \big|\le C(T(X)-t)^{\mu_0-\frac{2}{p-1}}.
  \end{eqnarray}
\end{lem}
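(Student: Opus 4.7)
The plan is simply to invert the self-similar change of variables (\ref{trans_auto}) and to convert the $\mathcal{H}$-convergence provided by the previous lemma into a pointwise estimate via a one-dimensional Sobolev embedding. No new dynamical input is needed; the work is book-keeping.

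First I fix $x_0\in\mathcal{R}$ and apply the previous lemma to get $\delta>0$, $s^*\in\mathbb{R}$, $\mu_0$, $C_0$ and $\theta(x_0)$ such that for every $X\in(x_0-\delta,x_0+\delta)$ and every $s\ge s^*$,
\[
\left\|\begin{pmatrix} w_X(s)\\ \partial_s w_X(s)\end{pmatrix}-e^{i\theta(x_0)}\begin{pmatrix}\kappa(T'(X),\cdot)\\ 0\end{pmatrix}\right\|_{\mathcal{H}}\le C_0\, e^{-\mu_0(s-s(x_0))}.
\]
Since on the compact interval $\{|y|\le 3/4\}$ the weight $\rho(y)=(1-y^2)^{2/(p-1)}$ and $1-y^2$ are bounded above and below by positive constants, the $\mathcal{H}_0$-norm is equivalent to the unweighted $H^1(-3/4,3/4)$ norm on this interval. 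The one-dimensional Sobolev embedding $H^1(-3/4,3/4)\hookrightarrow L^\infty(-3/4,3/4)$ then yields
\[
\sup_{|y|\le 3/4}\bigl|\,w_X(y,s)-e^{i\theta(x_0)}\kappa(T'(X),y)\,\bigr|\le C\, e^{-\mu_0(s-s(x_0))}
\]
for some constant $C$ depending only on $p$ and $C_0$.

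Next I translate this back through (\ref{trans_auto}). Setting $t^*(x_0)\in(0,\inf_{|X-x_0|\le\delta}T(X))$ so that $-\log(T(X)-t^*)\ge s^*$ for every such $X$, and given $t\in[t^*,T(X))$ and $\xi$ with $|\xi-X|\le\tfrac{3}{4}(T(X)-t)$, I set $y=(\xi-X)/(T(X)-t)$ and $s=-\log(T(X)-t)$, so that $|y|\le 3/4$ and $s\ge s^*$. The inverse transformation reads $u(\xi,t)=(T(X)-t)^{-2/(p-1)}w_X(y,s)$, and the explicit form of $\kappa$ in (\ref{defk}) gives
\[
(T(X)-t)^{-2/(p-1)}\kappa(T'(X),y)=\kappa_0\,\frac{(1-T'(X)^2)^{1/(p-1)}}{\bigl(T(X)-t+T'(X)(\xi-X)\bigr)^{2/(p-1)}}.
\]
Multiplying the pointwise estimate above by $(T(X)-t)^{-2/(p-1)}$ and using $e^{-\mu_0 s}=(T(X)-t)^{\mu_0}$ produces exactly the bound (\ref{155}), possibly enlarging $C$ to absorb the factor $e^{\mu_0 s(x_0)}$.

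I do not expect any serious obstacle: the only minor subtlety is that the $\mathcal{H}$-norm is weighted and allows degeneracies at $y=\pm 1$, but the restriction $|\xi-X|\le\tfrac{3}{4}(T(X)-t)$ is precisely designed to stay strictly inside the light cone, which makes the weight harmless and the Sobolev embedding available. The rest is a direct computation with the similarity variables.
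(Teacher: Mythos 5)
Your proposal is correct and is exactly the intended argument: the paper states this lemma as the direct translation of the preceding one back to the $u(x,t)$ variables (following Nouaili), and that translation is precisely your combination of the equivalence of the weighted $\mathcal{H}_0$-norm with $H^1(-3/4,3/4)$, the one-dimensional Sobolev embedding, and the inversion of (\ref{trans_auto}). Your computation also confirms that the denominator in (\ref{155}) should read $T(X)-t+T'(X)(\xi-X)$, the ``$-1$'' in the stated display being a typo.
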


Let $x_0$ in $\mathcal{R},$ and consider an arbitrary $\sigma\ge \frac{3}{4}$. For $\delta>0$, $x\in (x_0-\sigma,x_0+\sigma)$, we define $t=t(x,\sigma)$ by:
\begin{eqnarray}\label{o}
 \frac{|x_0-x|}{T(x_0)-t}=\frac{1}{\sigma}.
\end{eqnarray}
On the one hand, using (\ref{155}) with $X=\xi=x$, we get
\begin{eqnarray}\label{op}
 \big|u(x,t)-e^{i \theta (x)} \kappa_0\frac{(1-T'(x)^2)^\frac{1}{p-1}}{(T(x)-t)^\frac{2}{p-1}} \big|\le C(T(x)-t)^{\mu_0-\frac{2}{p-1}}
\end{eqnarray}

On the other hand, using (\ref{155}) $X=x_0$ and $\xi=x$, we get
\begin{eqnarray}\label{opp}
 \big|u(x,t)-e^{i \theta (x_0)} \kappa_0\frac{(1-T'(x_0)^2)^\frac{1}{p-1}}{(T(x_0)-1+ T'(x_0)(x-x_0))^\frac{2}{p-1}} \big|\le C(T(x)-t)^{\mu_0-\frac{2}{p-1}}
\end{eqnarray}
From (\ref{op}), (\ref{opp}) and  (\ref{o}) we derive
\begin{eqnarray}\label{oppa}
  \big|    e^{i(\theta (x_0)- \theta (x))}\frac{(1-T'(x_0)^2)^\frac{1}{p-1}}{(T'(x_0)sign (x-x_0)-1+ \sigma)^\frac{2}{p-1}} - \frac{(1-T'(x)^2)^\frac{1}{p-1}}{(\frac{T(x)-T(x_0)}{|x-x_0|}+\sigma)^\frac{2}{p-1}}  \big|\le C |x_0-x|^{\mu_0},
\end{eqnarray}
where $sign (x)=\frac{x}{|x|}$, for $x\neq 0$.\\

We separate the real and imaginary part in (\ref{oppa}),
\begin{eqnarray}\label{oppa1}
  \big|  \sin (\theta (x_0)- \theta (x))\frac{(1-T'(x_0)^2)^\frac{1}{p-1}}{(T'(x_0)sign (x-x_0)-1+ \sigma)^\frac{2}{p-1}}  \big|\le C |x_0-x|^{\mu_0},
\end{eqnarray}
and,
\begin{eqnarray}\label{oppa2}
  \big| \cos (\theta (x_0)- \theta (x))\frac{(1-T'(x_0)^2)^\frac{1}{p-1}}{(T'(x_0)sign (x-x_0)-1+ \sigma)^\frac{2}{p-1}} - \frac{(1-T'(x)^2)^\frac{1}{p-1}}{(\frac{T(x)-T(x_0)}{|x-x_0|}+\sigma)^\frac{2}{p-1}}  \big|\le C |x_0-x|^{\mu_0}.
\end{eqnarray}
From (\ref{oppa1}), $\big|  \sin (\theta (x_0)- \theta (x))\big|\le C |x_0-x|^{\mu_0}$. Hence, for $x$ close enough to $x_0$, we get
\begin{eqnarray*}
 \big|\theta (x_0)- \theta (x)\big|\le C |x_0-x|^{\mu_0}. 
\end{eqnarray*}
Thus, $\theta$ is $C^{1,\mu_0}$ near $x_0$.\\
In addition, for $x$ close enough to $x_0$,
\begin{eqnarray}\label{oppa3}
\cos (\theta (x_0)- \theta (x))=1+O(\theta (x_0)- \theta (x))=1+O(|x-x_0|^{2\mu_0}).
\end{eqnarray}
Using (\ref{oppa3}) with (\ref{oppa2}),
\begin{eqnarray}
  \big| \frac{(1-T'(x_0)^2)^\frac{1}{p-1}}{(T'(x_0)sign (x-x_0)-1+ \sigma)^\frac{2}{p-1}} - \frac{(1-T'(x)^2)^\frac{1}{p-1}}{(\frac{T(x)-T(x_0)}{|x-x_0|}+\sigma)^\frac{2}{p-1}}  \big|\le C |x_0-x|^{\mu_0}.
\end{eqnarray}
At this level, we reduce to the real case to conclude. We introduce a change of variables
$$f(\xi)=T(\xi+x_0)-T(x_0)-\xi T'(x_0)$$
and prove that $|f'(\xi)|\le C |\xi|^{\mu_0}$, which is equivalent to the fact that $T$ is $C^{1,\mu_0}$.

\section{Proof of the Liouville Theorem}

\subsection{Preliminaries}

In the following, we recall some results from \cite{<3}, which we have used in this work. In the following Proposition we recall some dispersion estimates.
\begin{prop}\label{3.1}{\bf (A Lyapunov functional for equation (\ref{equa}))}
Consider $w(y,s)$ a solution to (\ref{equa}) defined for all $(y,s)\in (-1,1)\times [s_0, +\infty)$ for some $s_0 \in   \mathbb R$. Then:\\
(i) For all $s_2 \ge s_1 \ge s_0$, we have
\begin{equation*}
E(w(s_2))-E(w(s_1))=-\frac{4}{p-1}\int_{s_1}^{s_2} \int_{-1}^1(\partial_s w(y,s))^2\frac{\rho(y)}{1-y^2}dy ds
\end{equation*}
where $E$ is defined in (\ref{Lyupunov}).\\
(ii) For all $s  \ge s_0+1$, $ \int_{-\frac{1}{2}}^{\frac{1}{2}} |w|^{p+1} dy\le C(E(w(s_0)+1)^p$.
\end{prop}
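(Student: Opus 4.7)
The plan for part (i) is to multiply equation (\ref{equa}) by $\overline{\partial_s w}\,\rho(y)$, integrate over $y\in(-1,1)$, and take real parts. Four of the resulting integrals will produce time derivatives of the corresponding pieces of $E$: the left-hand side gives the kinetic energy; $\mathcal L w\cdot\overline{\partial_s w}\,\rho$ is handled using the divergence form $\partial_y(\rho(1-y^2)\partial_y w)$ and an integration by parts in $y$ (boundary terms vanish because $\rho(1-y^2)$ vanishes at $\pm 1$), producing the gradient energy term; and the mass and nonlinear terms yield the remaining two pieces of $E$.

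The nontrivial step is the pair $-\tfrac{p+3}{p-1}\partial_s w - 2y\partial_{ys}w$, which must collapse to the dissipation on the right-hand side of (i). For the second term I would use $\mathrm{Re}(\partial_{ys}w\cdot\overline{\partial_s w}) = \tfrac12\partial_y|\partial_s w|^2$ and integrate by parts, producing $\int\partial_y(y\rho)|\partial_s w|^2\,dy$. Combining with the damping and using the explicit identity $\rho'(y) = -\tfrac{4y}{(p-1)(1-y^2)}\rho(y)$, a short algebraic simplification of the resulting coefficient $-\tfrac{p+3}{p-1} + \tfrac{p-1-(p+3)y^2}{(p-1)(1-y^2)}$ collapses the two contributions into $-\tfrac{4}{p-1}\int|\partial_s w|^2\tfrac{\rho}{1-y^2}\,dy$. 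Integration in $s$ from $s_1$ to $s_2$ then yields (i).

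For part (ii), the key point is that (i) alone gives only $E(w(s))\le E(w(s_0))$, which does not bound the $L^{p+1}$ norm of $w$, since $E$ itself contains the negative term $-\tfrac{1}{p+1}\int|w|^{p+1}\rho$. My plan is to follow the Antonini--Merle scheme \cite{AM01}: introduce the corrector $G(s) = \mathrm{Re}\int w\,\overline{\partial_s w}\,\rho\,dy$, compute $G'(s)$ directly from the equation, and combine $E$ and $G$ into a modified functional $H = E + \lambda G$ for a small $\lambda>0$. The identity for $G'(s)$ will produce a positive multiple of $\int|w|^{p+1}\rho$ plus error terms absorbed by the dissipation in (i); this forces $H$ to be bounded below uniformly by a quantity depending only on $E(w(s_0))$, and in turn bounds the time-average $\int_{s}^{s+1}\int|w|^{p+1}\rho$ by $C(E(w(s_0))+1)$. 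Restricting to $|y|\le 1/2$, where $\rho$ is bounded away from zero, and a mean-value argument in $s$ to pass from the time-average to a pointwise-in-$s$ bound, then give the stated estimate.

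The principal obstacle is the construction of $H$ and the verification of its lower bound uniformly in $s$: this is what produces the integrated control of $\int |w|^{p+1}\rho$. In the complex setting, the cross-terms are handled via real parts of Hermitian bilinear forms, and the nonlinearity $|w|^{p-1}w$ paired with $\overline w$ yields the real quantity $|w|^{p+1}$, so the sign structure needed for the real-case argument is preserved and no new difficulty arises from the phase of $w$. The explicit power $p$ in the upper bound $(E(w(s_0))+1)^p$ is expected to arise from a Gagliardo--Nirenberg interpolation between the dissipation and the localized $L^{p+1}$ estimate on unit time intervals.
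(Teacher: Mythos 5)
Your proposal is correct and follows essentially the same route as the paper, whose proof of this proposition consists of citing \cite{AM01} for (i) and \cite{MR2147056}, \cite{MZ05} for (ii): your energy computation for (i) (multiplying by $\overline{\partial_s w}\,\rho$, integrating by parts, and collapsing the damping and $-2y\partial_{ys}w$ contributions via $\rho'(y)=-\tfrac{4y}{(p-1)(1-y^2)}\rho(y)$) is exactly the Antonini--Merle identity and checks out, and your corrector-functional scheme for (ii) is the Merle--Zaag argument the paper points to. The only soft spot is that the precise bootstrap producing the exponent $p$ in $(E(w(s_0))+1)^p$ is asserted rather than derived, but this is no less detailed than the paper's own treatment.
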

\begin{proof}
The proof is the same as in the real case. See \cite{AM01} for $(i)$. For $(ii)$, see Proposition $2.2$ in \cite{MR2147056} for a statement and the proof of Proposition $3.1$ page $1156$ in \cite{MZ05} for the proof.
\end{proof}
We recall the set of all stationary solutions in $\mathcal{ H}_0$ of equation (\ref{equa}).
\begin{prop}\label{p1}{\bf (Characterization of all stationary solution of equation (\ref{equa}) in  $\mathcal{ H}_0$).}
Consider $w \in \mathcal{H}_0$ a stationary solution of (\ref{equa}). Then, either $w\equiv 0$ or there exist $d\in(-1,1)$ and $\theta \in \mathbb{R}$ such that $w(y)=e^{i\theta} \kappa (d,y)$ where $\kappa(d,y)$ is given in (\ref{defk}).

\end{prop}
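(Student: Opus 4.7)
Since the stationary equation reads
\begin{equation*}
\mathcal{L}w - aw + |w|^{p-1} w = 0, \qquad a := \tfrac{2(p+1)}{(p-1)^2},
\end{equation*}
and is invariant under $w \mapsto e^{i\theta} w$, the natural strategy is to exploit this $U(1)$ symmetry to reduce the problem to the classification of \emph{real-valued} stationary solutions in $\mathcal{H}_0$. I would write $w = R e^{i\Phi}$ with $R=|w|$ and $\Phi$ the argument (defined on $\{R>0\}$). Multiplying the equation by $\bar w$, the fact that $\bar w (aw - |w|^{p-1}w)$ is real gives $\Im(\bar w \mathcal{L}w) = 0$ pointwise. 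Rewriting $\bar w \mathcal{L}w = \rho^{-1}[\rho(1-y^2)\bar w w']' - (1-y^2)|w'|^2$ and taking the imaginary part yields the conservation law
\begin{equation*}
\frac{d}{dy}\bigl[\rho(y)(1-y^2)\, R^2 \Phi'\bigr] = 0,\qquad \text{i.e.\ } \rho(y)(1-y^2) R^2 \Phi' \equiv C
\end{equation*}
for some constant $C \in \mathbb R$, while the real part of the same identity gives $\mathcal{L}R - aR + R^p = (1-y^2) R (\Phi')^2$.

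The crux of the argument is to show $C=0$. Assume for contradiction that $C\neq 0$. Then $R$ cannot vanish, so $R>0$ on $(-1,1)$ and $\Phi' = C/[R^2 \rho(1-y^2)]$. Since $w\in \mathcal H_0$, one has
\begin{equation*}
\int_{-1}^{1} R^2 (\Phi')^2 (1-y^2)\rho \, dy \;=\; C^2\int_{-1}^{1}\frac{dy}{R^2\, \rho(y)(1-y^2)}\;<\; +\infty,
\end{equation*}
which, combined with $R\in L^2(\rho)$, pins down the boundary behavior of $R$ at $y=\pm 1$ sharply. I would then test the equation for $R$ against $R\rho$ and integrate by parts — the prescribed asymptotics ensuring vanishing of the boundary terms — to derive an integral identity in which the ``centrifugal'' term $\int C^2/[R^2\rho(1-y^2)]\,dy$ cannot be balanced by the remaining contributions $\int (1-y^2)(R')^2\rho\,dy$, $\int R^2\rho\,dy$, $\int R^{p+1}\rho\,dy$, yielding a contradiction. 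Hence $C=0$, so $\Phi$ is locally constant on $\{R>0\}$; by the analytic regularity of ODE solutions, either $R\equiv 0$ (so $w\equiv 0$), or $R>0$ on $(-1,1)$ and $\Phi\equiv\theta$ is globally constant, so that $w = e^{i\theta} R$.

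One is then reduced to classifying real-valued $\mathcal{H}_0$-solutions of $\mathcal{L}R-aR+R^p=0$. I would either invoke or reproduce the classification established in the real framework of Merle and Zaag, obtained by performing the change of variable $y=\tanh\zeta$, which transforms the singular ODE on $(-1,1)$ into an autonomous system on $\mathbb R$; a phase-plane analysis shows that its only finite-energy trajectories are precisely the translates corresponding to the one-parameter family $\kappa(d,\cdot)$, $d\in(-1,1)$. Absorbing a possible sign of $R$ into the phase $\theta$ then gives $w=e^{i\theta}\kappa(d,y)$ and finishes the proof. The main obstacle is Step 2: ruling out ``rotating'' complex finite-energy stationary solutions requires that the degenerate weight $\rho(1-y^2)$ at $y=\pm 1$ be handled with care, both in deriving the boundary asymptotics of $R$ and in justifying the integration by parts that closes the contradiction.
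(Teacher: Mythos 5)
Your overall strategy --- the polar decomposition $w=Re^{i\Phi}$, the conservation law $\rho(1-y^2)R^2\Phi'\equiv C$ obtained from the $U(1)$ invariance by taking the imaginary part of $\bar w\,\mathcal{L}w$, proving $C=0$, and then reducing to the real classification via $y=\tanh\zeta$ --- is exactly the route the paper follows: it gives no self-contained proof here but refers to Section 2 of \cite{<3}, describing the argument precisely as a decomposition of $w$ into modulus and phase with ``a delicate phase behavior''. Your derivation of the conservation law and your endgame (once $C=0$, the quantity $\bar w w'$ is real everywhere, so rotating $w$ by a constant phase at a point where it does not vanish and invoking ODE uniqueness makes $e^{-i\theta}w$ real on all of $(-1,1)$, after which the real Merle--Zaag classification applies) are both sound.

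The gap is in the pivotal step $C=0$, which is exactly the ``delicate'' part the paper alludes to. Testing $\mathcal{L}R-aR+R^{p}=(1-y^2)R(\Phi')^2$ against $R\rho$ gives, modulo boundary terms,
\begin{equation*}
\int_{-1}^{1} R^{p+1}\rho\,dy=\int_{-1}^{1}(1-y^2)(R')^2\rho\,dy+a\int_{-1}^{1} R^2\rho\,dy+C^2\int_{-1}^{1}\frac{dy}{R^2\rho(1-y^2)},
\end{equation*}
an identity in which every term is nonnegative and finite and the superlinear term sits alone on the left; there is no sign obstruction, so no contradiction follows, and the assertion that the centrifugal term ``cannot be balanced'' is unsubstantiated as stated. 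A correct closure staying entirely within your framework is the following. The $\mathcal{H}_0$ bound gives $A:=\int\frac{dy}{R^2\rho(1-y^2)}=C^{-2}\int R^2(\Phi')^2(1-y^2)\rho\,dy<\infty$, and the Hardy--Sobolev embedding $\mathcal{H}_0\hookrightarrow L^{p+1}_\rho$ gives $B:=\int R^{p+1}\rho\,dy<\infty$. Applying H\"older with exponents $q=\frac{p+3}{p+1}$ and $q'=\frac{p+3}{2}$ to the product $\bigl(R^2\rho(1-y^2)\bigr)^{-1/q}\bigl(R^{p+1}\rho\bigr)^{1/q'}$, the powers of $R$ cancel exactly and the powers of $(1-y^2)$ combine to $-1$, so that $\int_{-1}^{1}\frac{dy}{1-y^2}\le A^{1/q}B^{1/q'}<\infty$, contradicting the divergence of that integral at $y=\pm1$; hence $C=0$. (Equivalently, after $y=\tanh\zeta$ the conserved Hamiltonian of the autonomous system contains the effective potential $C^2/(2v^2)$, which is incompatible with the decay of $v$ at infinity.) With that replacement, and granting the real-case classification you invoke, your proof closes and coincides in substance with the one cited by the paper.
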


\begin{proof}

The proof of this Proposition present more difficulties than the real case. In fact, in addition to the techniques used in the real case, we have used an ODE techniques for complex-valued equation, in particular, a decomposition $w(y)=\rho(y)e^{i\theta (y)}$ with a delicate phase behavior $\theta (y)$. For more details see Section 2 in \cite{<3}. \\

\end{proof}

\subsection{Proofs of Theorem \ref{2'} and Theorem \ref{2} }

\begin{proof} [Proof of Theorem \ref{2'} assuming Theorem \ref{2} ] The proof is the same as in the real case. For the reader's convenience we recall it.
Consider $w(y, s)$ a solution to equation (\ref{equa}) defined for all $(y, s) \in (- \frac{1}{ \delta_*},  \frac{1}{ \delta_*})\times \mathbb R$ for some $\delta_*  \in (0,1)$ such that for all $s   \in \mathbb R$, (\ref{17}) holds. 

If we introduce the function $u(x, t)$ defined by
\begin{equation}\label{63}
u(x, t) = (-t)^\frac{-2}{ p-1} w(y, s) \mbox{ where } y =\frac{x}{-t}\mbox{ and } s =- \log(-t),
 \end{equation}
then we see that $u(x, t)$ satisfies the hypotheses of Theorem \ref{2} with $T_* = x_* = 0$, in
particular (\ref{15}) holds. Therefore, the conclusion of Theorem \ref{2} holds for $u$. Using back
(\ref{63}), we directly get the conclusion of Theorem \ref{2'}.
\end{proof}

Now, we introduce the proof of the Theorem \ref{2}.
\begin{proof} [Proof of Theorem \ref{2} ]

Consider a solution $u(x, t)$ to equation (\ref{waveq}) defined in the backward cone $\mathcal{C}_{x_*,T_*,\delta_*}$ (see (\ref{4})) such that (\ref{15}) holds, for some $(x_*,T_*) \in \mathbb R^2$ and $\delta_* \in  (0,1)$. From the bound (\ref{15}) and the resolution of the Cauchy problem of equation  (\ref{waveq}), we can extend the solution by a function still denoted by $u(x, t)$ and defined in some influence domain $D_u$ of the form
\begin{equation}\label{64}
D_u=\{(x,t)  \in  \mathbb R^2 | t<T(x)\}.
\end{equation}

for some 1-Lipschitz function $T(x)$ where one of the following cases occurs:\\
- Case 1: For all $x\in  \mathbb R, \, T(x) \equiv \infty$.\\
- Case 2: For all $x \in \mathbb R, \, T(x) < +\infty$. In this case, since $u(x,t)$ is known to be defined on $\mathcal{C}_{x_*,T_*,\delta_*}$ (\ref{4}), we have $\mathcal{C}_{x_*,T_*,\delta_*}  \subset D_u$, hence from (\ref{4}) and (\ref{64})
\begin{equation}\label{65}
\forall x  \in  \mathbb R,\; T(x)\ge T_*-\delta_*  | x-x_*|.
\end{equation}
 In this case,  we will denote the set of non characteristic points by $I_0$.

\medskip

We will treat separately these two cases: 

{\bf  Case 1: $T(x) \equiv \infty$.}\\
In the following, we give the behavior of $w_{\bar x, \bar T}(s)$ as $s\rightarrow -\infty$.

\begin{prop}\label{Behavior1}{\bf (Behavior of $w_{\bar x, \bar T}(s)$ as $s\rightarrow -\infty$)}
For any $(   \bar x,   \bar T)  \in  \bar D_u$, it holds that as $s\rightarrow -\infty$,

either
$$||w_{\bar x, \bar T}(s)||_{H^1(-1,1)}+||\partial_s w_{\bar x, \bar T}(s)||_{L^2(- 1,1)}\rightarrow 0 \mbox{ in } H^1 \times L^2(-1,1),$$
or for some $ \theta ( \bar x,  \bar T)  \in \mathbb R$
$$\inf_{\{\theta \in \mathbb R,\; |d|<1\}}||w_{\bar x, \bar T}(.,s)-e^{i\theta(\bar x, \bar T)} \kappa(d,.)||_{H^1(-1,1)}+||\partial_s w_{\bar x, \bar T}||_{L^2(-1,1)} \rightarrow  0$$
where $  \kappa_0$ defined in (\ref{defk}).
\end{prop}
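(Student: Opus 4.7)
The plan is to exploit the Lyapunov functional $E$ backward in time and combine it with the classification of stationary solutions given by Proposition \ref{p1}. First I would observe that in Case 1 ($T\equiv+\infty$), a standard combination of the hypothesis (\ref{15}), finite speed of propagation, and the energy method yields a uniform bound
\[
\sup_{s\in\mathbb R}\bigl(\|w(s)\|_{\mathcal H_0}+\|\partial_s w(s)\|_{L^2_\rho}\bigr)<+\infty
\]
for $w=w_{\bar x,\bar T}$. Combined with the Sobolev embedding $H^1((-1,1))\hookrightarrow L^{p+1}$, this controls every term in $E$, so $E(w(s),\partial_s w(s))$ is bounded on $\mathbb R$. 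Proposition \ref{3.1}(i) then gives a finite limit $E_{-\infty}=\lim_{s\to-\infty}E(w(s))$ and the global dissipation bound
\[
\int_{-\infty}^{+\infty}\int_{-1}^{1}|\partial_s w|^2\frac{\rho(y)}{1-y^2}\,dy\,ds<+\infty.
\]

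Next, given an arbitrary sequence $s_n\to-\infty$, the above integrability produces times $\sigma_n\in[s_n-1,s_n]$ along which $\int_{-1}^{1}|\partial_s w(\sigma_n)|^2\rho/(1-y^2)\,dy\to 0$. Since $\{w(\sigma_n)\}$ is bounded in $\mathcal H_0$, weighted Rellich compactness extracts a subsequence with $w(\sigma_n)\to w_\infty$ in $L^2_\rho$ and weakly in $\mathcal H_0$. Passing to the limit in (\ref{equa}) in the distributional sense, $w_\infty$ is a stationary solution of (\ref{equa}) lying in $\mathcal H_0$. Applying Proposition \ref{p1}, either $w_\infty\equiv 0$, or $w_\infty=e^{i\theta_\infty}\kappa(d_\infty,\cdot)$ for some $\theta_\infty\in\mathbb R$, $d_\infty\in(-1,1)$. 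The identity $E_{-\infty}=E(w_\infty,0)$ pins down which case occurs, since the energy of every $e^{i\theta}\kappa(d,\cdot)$ equals $E(\kappa_0,0)$ independently of $(\theta,d)$, while $E(0,0)=0$.

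Finally I would promote this subsequential convergence to the continuous-time limit in the statement. If $E_{-\infty}=0$, the strong $L^2_\rho$ convergence together with $\partial_s w(s)\to 0$ in the weighted norm forces $(w(s),\partial_s w(s))\to 0$ in $\mathcal H$ along every sequence, giving the first alternative. If instead $E_{-\infty}=E(\kappa_0,0)$, then every subsequential limit lies on the orbit $\mathcal M=\{e^{i\theta}\kappa(d,\cdot):\theta\in\mathbb R,\,|d|<1\}$; a standard $\omega$-limit/connectedness argument (using the boundedness and the vanishing of $\partial_s w$ in measure along $\mathbb R^-$) then shows that the whole trajectory approaches $\mathcal M$, which is exactly the infimum convergence in the second alternative.

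The hard part is genuinely the last step: in the complex setting the set of non-trivial stationary solutions is a two-dimensional orbit under $U(1)\times(-1,1)$ rather than a one-dimensional curve as in the real case, so one cannot expect convergence to a single profile, only to the orbit. This is why the statement is phrased as an infimum over $(\theta,d)$, and why the classification in Proposition \ref{p1}, whose proof required a delicate phase analysis of the stationary ODE, is the essential input. A secondary technical point is to guarantee strong (rather than merely weak) $\mathcal H_0$ convergence at the extraction times $\sigma_n$; this follows from the weighted compact embedding $\mathcal H_0\Subset L^2_\rho$ together with the quadratic control of $E$ near the limit.
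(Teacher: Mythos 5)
Your outline is essentially the paper's own proof: the paper simply invokes the real-case argument of Merle and Zaag, adapted to the enlarged set of stationary solutions $S=\{0\}\cup\{e^{i\theta}\kappa(d,\cdot)\}$, which is exactly the backward-energy/dissipation, compactness, classification-by-Proposition \ref{p1}, and energy-dichotomy scheme you describe, with the infimum over the $(\theta,d)$-orbit replacing convergence to a single profile. The only step you compress is the uniform $H^1\times L^2$ bound on $w_{\bar x,\bar T}$ for arbitrary $(\bar x,\bar T)$, which in the real case rests on the nontrivial growth estimates near the blow-up surface (the results behind Proposition \ref{3.1}(ii)) rather than on finite speed of propagation alone, but this matches what the paper itself takes for granted.
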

Now, we derive the behavior of the Lyapunov functional $E(w_{\bar x, \bar T}(s))$ defined by (\ref{Lyupunov}) as $s\rightarrow -\infty$.

\begin{cor}\label{Behavior2}{\bf (Behavior of $E(w_{\bar x, \bar T}(s))$ as $s\rightarrow -\infty$)}
(i) For all $d  \in (-1,1)$ and $\theta \in \mathbb R,$
\begin{equation}\label{14}
 E(e^{i\theta}\kappa (d,.),0)=E(\kappa_0,0)>0
\end{equation}
(ii) For any $(\bar x, \bar T) \in \bar D_u$, either $ E(w_{\bar x, \bar T}(s)) \rightarrow 0$ or  $ E(w_{\bar x, \bar T}(s)) \rightarrow  E(\kappa_0)>0$ as $s \rightarrow  -\infty$. \\
In particular,
\begin{equation}\label{69}
\forall s \in \mathbb R, E(w_{\bar x, \bar T}(s)) \le E(\kappa_0).
\end{equation}
\end{cor}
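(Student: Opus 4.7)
The corollary will follow from Proposition \ref{Behavior1} (the asymptotics of $w_{\bar x,\bar T}$ in $\mathcal{H}$) combined with the Lyapunov monotonicity of Proposition \ref{3.1}(i). My plan is: part (i) is an algebraic identity about $E$; part (ii) is the transport of that identity through the asymptotic dynamics; and the inequality (\ref{69}) is monotonicity applied backwards in $s$.

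For part (i), the key observation is that every term in the integrand of $E(w,\partial_s w)$ involves $w$ only through $|w|$ (and similarly for $|\partial_y w|$ and $|\partial_s w|$), so a constant phase factor $e^{i\theta}$ leaves $E$ invariant. This reduces the claim to the real-valued identity $E(\kappa(d,\cdot),0) = E(\kappa_0,0)$, which can be verified by substitution using (\ref{defk}): the change of variable $z=(y+d)/(1+dy)$ sends $\kappa(d,y)$ to $\kappa_0(z)$ and, by a direct computation of the Jacobian together with the identity $1-z^2 = (1-d^2)(1-y^2)/(1+dy)^2$, transforms the measure appropriately so that each of the three integrals defining $E$ reduces to its $d=0$ value. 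Positivity of $E(\kappa_0,0)$ is then a direct computation using the explicit value of $\kappa_0$.

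For part (ii), I apply Proposition \ref{Behavior1} at $(\bar x,\bar T)\in \bar D_u$. The functional $E$ is continuous on $\mathcal{H}$ (the only nontrivial term $\int|w|^{p+1}\rho\,dy$ being handled by the weighted Sobolev embedding $\mathcal{H}_0 \hookrightarrow L^{p+1}_\rho$), so I can pass to the limit $s\to-\infty$: in the first alternative of Proposition \ref{Behavior1}, $E(w_{\bar x,\bar T}(s)) \to E(0,0)=0$; in the second, $E(w_{\bar x,\bar T}(s)) \to E(e^{i\theta}\kappa(d,\cdot),0)=E(\kappa_0,0)$, the last equality coming from part (i).

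The bound (\ref{69}) then follows from Proposition \ref{3.1}(i): the map $s \mapsto E(w_{\bar x,\bar T}(s))$ is non-increasing, so for every $s\in\mathbb{R}$ it is bounded above by its limit at $-\infty$, which by (ii) is either $0$ or $E(\kappa_0,0)$; in either case it is at most $E(\kappa_0,0)$. The only mildly technical point is the continuity of $E$ on $\mathcal{H}$, but this is standard once one has the weighted Sobolev embedding mentioned above; I do not expect a genuine obstacle once Proposition \ref{Behavior1} is granted.
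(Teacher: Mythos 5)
Your overall architecture matches the intended one (the paper itself only says ``adapt \cite{MR2415473}''): phase invariance of $E$ for part (i), Proposition \ref{Behavior1} plus continuity of $E$ on $\mathcal{H}$ (via the embedding $\mathcal{H}_0\hookrightarrow L^{p+1}_\rho$, and noting that convergence in $H^1\times L^2(-1,1)$ implies convergence in $\mathcal{H}$) for the dichotomy in (ii), and the Lyapunov monotonicity of Proposition \ref{3.1}(i) applied backwards in $s$ for (\ref{69}). Those parts are sound.

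However, your proof of the real identity $E(\kappa(d,\cdot),0)=E(\kappa_0,0)$ in part (i) contains a genuine error: it is \emph{not} true that the substitution $z=(y+d)/(1+dy)$ sends each of the three integrals in $E$ to its $d=0$ value. The gradient term alone refutes this: $\partial_y\kappa(0,y)\equiv 0$, so $\int_{-1}^1|\partial_y\kappa(0,\cdot)|^2(1-y^2)\rho\,dy=0$, while $\partial_y\kappa(d,y)=-\tfrac{2d}{p-1}\kappa_0(1-d^2)^{1/(p-1)}(1+dy)^{-\frac{2}{p-1}-1}$ makes the corresponding integral strictly positive for $d\neq 0$; no change of variables can equate a positive number with zero (a similar computation shows the $\int|w|^2\rho$ term is not invariant either; among the three, only $\int|\kappa(d,\cdot)|^{p+1}\rho\,dy$ is preserved by your substitution, because the exponent of $(1+dy)$, namely $-\tfrac{2(p+1)}{p-1}+\tfrac{4}{p-1}+2$, cancels exactly for the power $p+1$). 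The identity holds only for the specific combination of terms in $E$, and the missing ingredient is the stationarity of $\kappa(d,\cdot)$: multiplying the stationary version of (\ref{equa}) by $\overline{\kappa(d,\cdot)}\,\rho$ and integrating by parts gives $\int|\partial_y\kappa|^2(1-y^2)\rho\,dy=-\tfrac{2(p+1)}{(p-1)^2}\int|\kappa|^2\rho\,dy+\int|\kappa|^{p+1}\rho\,dy$, whence $E(\kappa(d,\cdot),0)=\tfrac{p-1}{2(p+1)}\int_{-1}^1|\kappa(d,y)|^{p+1}\rho(y)\,dy$. Only at that point does your change of variables apply (it shows $\int|\kappa(d,\cdot)|^{p+1}\rho\,dy=\kappa_0^{p+1}\int_{-1}^1\rho\,dy$ is independent of $d$), yielding $E(\kappa(d,\cdot),0)=\tfrac{p-1}{2(p+1)}\kappa_0^{p+1}\int_{-1}^1\rho\,dy=\tfrac{\kappa_0^{2}}{p-1}\int_{-1}^1\rho\,dy=E(\kappa_0,0)>0$.
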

\begin{proof} [Proofs of Proposition \ref{Behavior1} and Corollary \ref{Behavior2} ]
The proof is similar to the real case in \cite{MR2415473}, we have only to adapt it with respect to our set  of stationary solutions
\begin{equation*}
 S\equiv \{0, e^{i\theta} \kappa (d,.), |d|<1, \theta \in \mathbb{R}\}.
\end{equation*}

\end{proof}
In the following, we conclude the proof of Theorem \ref{2}, when case $1$ holds.
\begin{cor}\label{3.4}
If for all $x  \in \mathbb R$, $T(x) \equiv +  \infty$, then $u  \equiv 0$.
\end{cor}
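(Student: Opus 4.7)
The strategy is to combine the Lyapunov structure of $E$ from Proposition \ref{3.1} with the backward classification of Corollary \ref{Behavior2} and a straightforward forward analysis, made possible by the global regularity of $u$, so as to force $w_{\bar x,\bar T} \equiv 0$ at every $(\bar x,\bar T) \in \mathbb{R}^2 = \bar D_u$.

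\emph{Step 1: backward behavior.} Fix an arbitrary $(\bar x,\bar T) \in \mathbb{R}^2$. Since $T \equiv +\infty$, the self-similar function $w_{\bar x,\bar T}(y,s)$ is defined for every $(y,s) \in (-1,1) \times \mathbb{R}$. Corollary \ref{Behavior2}(ii) then yields $E(w_{\bar x,\bar T}(s)) \to E_- \in \{0, E(\kappa_0)\}$ as $s \to -\infty$, and Proposition \ref{3.1}(i) guarantees that $E(w_{\bar x,\bar T}(s))$ is non-increasing in $s$, so $E(w_{\bar x,\bar T}(s)) \le E_-$ for every $s$.

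\emph{Step 2: forward behavior.} Using the identity
\begin{equation*}
w_{\bar x,\bar T}(y,s) \;=\; e^{-2s/(p-1)}\, u\!\left(\bar x + e^{-s}y,\; \bar T - e^{-s}\right),
\end{equation*}
together with the smoothness of $u$ on a neighborhood of $(\bar x,\bar T)$ (which is interior to $D_u$ since $T \equiv +\infty$), I would show that as $s \to +\infty$ the prefactor $e^{-2s/(p-1)}$ vanishes while $u$ and its first derivatives remain uniformly bounded on the shrinking spatial interval $\bar x + e^{-s}(-1,1)$ at time $\bar T - e^{-s}$; a direct differentiation of the above identity shows that each of $w$, $\partial_y w$, $\partial_s w$ tends to $0$ uniformly in $y \in (-1,1)$. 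Hence $\|w_{\bar x,\bar T}(s)\|_\mathcal{H} \to 0$, and in particular $E(w_{\bar x,\bar T}(s)) \to 0$ as $s \to +\infty$.

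\emph{Step 3: conclusion.} Integrating the Lyapunov identity in Proposition \ref{3.1}(i) from $s_1$ to $s_2$ and letting $s_1 \to -\infty$, $s_2 \to +\infty$ gives
\begin{equation*}
E_- \;=\; \frac{4}{p-1} \int_{-\infty}^{+\infty} \!\int_{-1}^{1} |\partial_s w_{\bar x,\bar T}(y,s)|^2\, \frac{\rho(y)}{1-y^2}\, dy\, ds \;\ge\; 0.
\end{equation*}
If $E_- = 0$, the integral vanishes, so $\partial_s w_{\bar x,\bar T} \equiv 0$; hence $w_{\bar x,\bar T}(y,s) = \phi(y)$ is stationary, and Proposition \ref{p1} combined with the forward limit $\phi = \lim_{s \to +\infty} w_{\bar x,\bar T}(s) = 0$ forces $\phi \equiv 0$, i.e.\ $u \equiv 0$ on the backward cone of $(\bar x,\bar T)$. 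Varying $(\bar x,\bar T)$ then yields $u \equiv 0$ on all of $\mathbb{R}^2$. The remaining case $E_- = E(\kappa_0)$ must be ruled out: by Proposition \ref{Behavior1} we would have $w_{\bar x,\bar T}(s) \to e^{i\theta(\bar x,\bar T)} \kappa(d(\bar x,\bar T), \cdot)$ as $s \to -\infty$; adapting the rigidity argument from \cite{MR2415473} to the complex setting, the consistency of this asymptotic profile across varying $(\bar x,\bar T)$ forces $u$ to coincide with the explicit profile in (\ref{16}), which blows up along a straight line in spacetime and thus contradicts $T \equiv +\infty$.

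\emph{Main obstacle.} The delicate point is the exclusion of $E_- = E(\kappa_0)$: beyond the real-case rigidity, one has to track the additional phase $\theta(\bar x,\bar T)$ jointly with the slope $d(\bar x,\bar T)$ as $(\bar x,\bar T)$ varies, which is precisely the new difficulty arising from the complex-valued setting.
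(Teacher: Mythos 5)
Your strategy (backward dichotomy for $E$, forward vanishing of $w_{\bar x,\bar T}$, dissipation identity) is not the paper's, and it has a genuine gap exactly where all the difficulty is concentrated: the exclusion of the case $E_-=E(\kappa_0)$. Steps 1 and 2 are sound — since $u$ is globally defined, the prefactor $(T_0-t)^{2/(p-1)}=e^{-2s/(p-1)}$ does kill every term of the $\mathcal H$-norm as $s\to+\infty$, so the forward limit of $E(w_{\bar x,\bar T}(s))$ is $0$ and the problem reduces to showing $E_-\neq E(\kappa_0)$. But the sentence ``adapting the rigidity argument from \cite{MR2415473}, the consistency of this asymptotic profile across varying $(\bar x,\bar T)$ forces $u$ to coincide with the explicit profile in (\ref{16})'' asserts the hard part rather than proving it, and that argument does not transfer: the rigidity machinery you invoke is precisely the content of Case 2 of the proof of Theorem \ref{2} (Propositions \ref{3.5} and \ref{3.7}, Corollaries \ref{3.8} and \ref{3.9}), and it is built on the existence and propagation of non-characteristic points of a finite blow-up curve $T(x)<+\infty$. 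When $T\equiv+\infty$ there is no blow-up curve, no non-characteristic point, and no trapping statement to apply; the convergence of $w_{\bar x,\bar T}(s)$ to the family $\{e^{i\theta}\kappa(d,\cdot)\}$ as $s\to-\infty$ for each fixed $(\bar x,\bar T)$ separately does not by itself yield the exact solution (\ref{16}) — you would at minimum need uniformity of $(d,\theta)$ in $(\bar x,\bar T)$ and a quantitative backward-in-time trapping result, neither of which is available. As written, the case $E_-=E(\kappa_0)$ is not ruled out.

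The paper's own proof sidesteps the dichotomy entirely and is much shorter. It uses only the uniform upper bound $E(w_{0,T}(s))\le E(\kappa_0)$ of (\ref{69}) (which holds in both branches of Corollary \ref{Behavior2}) together with the $L^{p+1}$ estimate of Proposition \ref{3.1}~(ii); these give
\begin{equation*}
\int_{-1/2}^{1/2}|w_{0,T}(y,s)|^{p+1}\,dy\le C\bigl(E(\kappa_0)+1\bigr)^p
\end{equation*}
uniformly in $T$ and $s$. Undoing the self-similar change of variables (\ref{trans_auto}) converts this into
\begin{equation*}
\int_{|x|\le (T-t)/2}|u(x,t)|^{p+1}\,dx\le C_1\,(T-t)^{1-\frac{2(p+1)}{p-1}},
\end{equation*}
and since the exponent is negative, fixing $t$ and letting $T\to+\infty$ forces $u(\cdot,t)\equiv0$, hence $u\equiv0$. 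If you wish to keep your framework, replace your Step 3 by this scaling argument; otherwise the exclusion of $E_-=E(\kappa_0)$ needs a complete, self-contained proof.
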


\begin{proof} 
 In this case, $u(x, t)$ is defined for all $(x, t)  \in \mathbb{R}^2$. The conclusion is a consequence of the uniform bounds stated in the hypothesis of Theorem \ref{2} and the bound for solutions
of equation (\ref{equa}) in terms of the Lyapunov functional stated in $(ii)$ of Lemma \ref{3.1}. Indeed,
consider for arbitrary $t  \in \mathbb{R}$ and $T > t$ the function $w_{0,T}$ defined from $u(x, t)$ by means of
the transformation (\ref{trans_auto}). Note that $w_{0,T}$ is defined for all $(y, s)   \in \mathbb{R}^2$. If $s = -\log(T - t)$,
then we see from $(ii)$ in Lemma \ref{3.1} and (\ref{69}) that

$$ \int_{-\frac{1}{2}}^{\frac{1}{2}} |w_{0,T }(y,s)|^{p+1} dy\le C(E(w_{0,T }(s_0))+1)^p    \le C(E(\kappa_0)+1)^p  \equiv C1.$$
Using (\ref{trans_auto}), this gives in the original variables

$$ \int_{-\frac{T-t}{2}}^{\frac{T-t}{2}} |u(x,t)|^{p+1} dx\le C_1 (T-t)^{-\frac{2(p+1)}{p-1}+1}.$$

Fix $t$ and let $T$ go to infinity to get $u(x, t) = 0$ for all $x\in \mathbb{R}$, and then $u  \equiv 0$, which
concludes the proof of Corollary \ref{3.4} and thus the proof of Theorem \ref{2} in the case where $T(x)\equiv +\infty$.

\end{proof}
\medskip

{\bf  Case 2: $T(x) < +\infty$}\\

In this case also, we conclude by the same way as in the real case in \cite{MR2415473}. For the reader's convenience we give the three important ideas used in order to conclude the proof:\\
- In Step $1$, we localize a non characteristic point for some slop $\delta_1$.\\
- In Step $2$, we give an explicit expression of $w$ at non characteristic points.\\
- In Step $3$, we see that the set of non characteristic points is given by the hole space $\mathbb{R}$.\\
- In Step $4$, we use this three previous steps to conclude the proof when $T(x) < +\infty$.\\
\bigskip

\noindent $\rightsquigarrow$ {\it Step $1$: Localization of a non characteristic point in a given cone with slope $\delta_1>1$}: We claim the following:

\begin{prop}\label{3.5}{\bf (Existence of a non characteristic point with a given location)}

For all $x_1 \in \mathbb{R}$ and $\delta_1 \in (\delta_*, 1)$, there exists $x_0 = x_0 (x_1 ,\delta_1 )$ such that
\begin{equation}
 (x_0 , T (x_0))\in \mathcal{\bar{C}}_{x_1,T(x_1),\delta_1} \mbox{ and } \mathcal{\bar{C}}_{x_0,T(x_0),\delta_1} \subset D_u .
\end{equation}
In particular, $x_0$ is non characteristic.
\end{prop}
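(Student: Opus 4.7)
\noindent\textbf{Proof plan for Proposition \ref{3.5}.} The approach is to rephrase the inclusion $\overline{\mathcal{C}}_{x_0,T(x_0),\delta_1}\subset D_u$ as a minimality property for the blow-up time $T$, and then construct $x_0$ as a minimizer of $T$ over a suitable compact subset of $\mathbb{R}$.

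First I would translate the geometric inclusion into an analytic inequality: $(x,T(x))\in\overline{\mathcal{C}}_{x_1,T(x_1),\delta_1}$ is equivalent to $T(x)+\delta_1|x-x_1|\le T(x_1)$. Denote the set of such $x$ by $A$. Clearly $x_1\in A$, and since $T$ is continuous (being 1-Lipschitz), $A$ is closed. For compactness I would combine the strict inequality $\delta_1>\delta_*$ with the lower bound (\ref{65}), which yields for every $x\in A$
\[
T_*-\delta_*|x-x_*|+\delta_1|x-x_1|\le T(x)+\delta_1|x-x_1|\le T(x_1).
\]
The left-hand side grows like $(\delta_1-\delta_*)|x|$ as $|x|\to\infty$, forcing $|x|$ to stay bounded. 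Hence $A$ is compact.

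Next I would pick $x_0\in A$ minimizing $T$; such a point exists by continuity of $T$ on the compact set $A$, and by construction $(x_0,T(x_0))\in\overline{\mathcal{C}}_{x_1,T(x_1),\delta_1}$. It remains to show $T(\xi)>T(x_0)-\delta_1|\xi-x_0|$ for every $\xi\neq x_0$, which is equivalent to $\overline{\mathcal{C}}_{x_0,T(x_0),\delta_1}\subset D_u$. I argue by contradiction: if some $\xi\neq x_0$ satisfies $T(\xi)\le T(x_0)-\delta_1|\xi-x_0|$, then in particular $T(\xi)<T(x_0)$, and adding $\delta_1|\xi-x_1|$ to both sides together with the triangle inequality $|\xi-x_1|-|\xi-x_0|\le|x_0-x_1|$ gives
\[
T(\xi)+\delta_1|\xi-x_1|\le T(x_0)+\delta_1|x_0-x_1|\le T(x_1),
\]
so $\xi\in A$ with $T(\xi)<T(x_0)$, contradicting the minimality of $x_0$. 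The non-characteristic property of $x_0$ then follows directly from the definition (\ref{4}), choosing $\delta_0=\delta_1$ and any $t_0<T(x_0)$.

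I expect the only delicate step to be the compactness of $A$, which crucially exploits the strict slope margin $\delta_1>\delta_*$ together with the growth estimate (\ref{65}) inherited from (\ref{4}). Without this gap, $A$ could be unbounded and the whole minimization strategy would collapse; by contrast, once compactness and the minimizer $x_0$ are in hand, the triangle-inequality trick that verifies $\overline{\mathcal{C}}_{x_0,T(x_0),\delta_1}\subset D_u$ is essentially automatic, since the slope of the perturbed cone matches the slope defining $A$.
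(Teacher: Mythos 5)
Your proposal is correct and matches the paper's intended argument: the paper's proof simply defers to the geometrical construction on page 73 of \cite{MR2415473}, which is exactly your scheme — show that the set of $x$ with $(x,T(x))$ in the closed cone $\mathcal{\bar{C}}_{x_1,T(x_1),\delta_1}$ is nonempty and compact (using the 1-Lipschitz continuity of $T$, the lower bound (\ref{65}), and the strict gap $\delta_1>\delta_*$), then take $x_0$ minimizing $T$ there and verify the cone inclusion by the triangle inequality. No substantive difference from the paper's route.
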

\begin{proof} 
In the proof we use a geometrical construction (see page 73 in \cite{MR2415473}).

\end{proof} 
\noindent {\bf Remark}: From this Proposition, we see that we have at least a non  characteristic point: In fact, Taking $x_1=x_*$ and $\delta_1=\frac{1+\delta_*}{2}$,  $x_0 \in \mathbb{R}$ is non characteristic point  (in the sense (\ref{4})).

\bigskip

\noindent $\rightsquigarrow ${\it Step $2$:  An explicit expression of $w$ at non characteristic points }: \\
We claim the following
\begin{prop}\label{3.7}{\bf (Characterization of $w_{x_0}$ when $x_0$ is non characteristic) }
If $x_0$ is non characteristic, then, there exist $d(x_0 ) \in (−1, 1)$ and $\theta(x_0) \in \mathbb{R}$ such that for all
$(y,s)\in(−1, 1)\times\mathbb{R}$, $w_{x_0} (y, s) = e^{iθ(x_0 )}\kappa(d(x_0 ), y)$.
\end{prop}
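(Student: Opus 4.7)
The plan is to prove that at a non-characteristic $x_0$, the self-similar function $w_{x_0}$ is actually time-independent; by Proposition \ref{p1}, it must then be either $0$ or $e^{i\theta(x_0)}\kappa(d(x_0),y)$, and I will rule out the trivial option. The whole argument pivots on the Lyapunov functional $E$ from (\ref{Lyupunov}) and its monotonicity given by Proposition \ref{3.1}(i); concretely, the aim is to show $E(w_{x_0}(s),\partial_s w_{x_0}(s)) \equiv E(\kappa_0,0)$ for every $s\in\mathbb{R}$.

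For the forward limit, since $x_0$ is non-characteristic for the (extended) blow-up solution $u$ on $D_u$, the profile theorem recalled in (\ref{10}) gives $(w_{x_0}(s),\partial_s w_{x_0}(s)) \to e^{i\theta(x_0)}(\kappa(d(x_0),\cdot),0)$ in $\mathcal{H}$ as $s\to+\infty$. Continuity of $E$ on $\mathcal{H}$, combined with Corollary \ref{Behavior2}(i) (which asserts $E(e^{i\theta}\kappa(d,\cdot),0)=E(\kappa_0,0)$ for every admissible $d,\theta$), then yields $E(w_{x_0}(s))\to E(\kappa_0,0)$.

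For the backward limit, I will apply Proposition \ref{Behavior1} and Corollary \ref{Behavior2}(ii) at $(\bar x,\bar T)=(x_0,T(x_0))\in\bar D_u$, which identify the backward-time $\omega$-limit as a stationary solution and hence pin down $\lim_{s\to-\infty}E(w_{x_0}(s))\in\{0,E(\kappa_0,0)\}$. Monotonicity forces $E(w_{x_0}(s))\ge E(\kappa_0,0)>0$ for all $s$, so the backward limit is in fact $E(\kappa_0,0)$. Sandwiched between two equal limits, the non-increasing function $s\mapsto E(w_{x_0}(s))$ is constant; the dissipation identity in Proposition \ref{3.1}(i) applied between arbitrary $s_1<s_2$ then gives $\int_{s_1}^{s_2}\int_{-1}^1 |\partial_s w_{x_0}|^2\,\rho/(1-y^2)\,dy\,ds=0$, hence $\partial_s w_{x_0}\equiv 0$. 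Thus $w_{x_0}$ is a stationary solution of (\ref{equa}) lying in $\mathcal{H}_0$, and Proposition \ref{p1} produces the dichotomy $w_{x_0}\equiv 0$ or $w_{x_0}(y)=e^{i\theta(x_0)}\kappa(d(x_0),y)$; the first is incompatible with $E(w_{x_0})=E(\kappa_0,0)>0=E(0,0)$, which closes the argument.

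The main obstacle I anticipate is the rigorous use of Proposition \ref{Behavior1} and Corollary \ref{Behavior2} at $(x_0,T(x_0))$ while we are in Case 2, since the outline places those statements under Case 1 ($T\equiv\infty$). Their content is nonetheless generic: they only require $w_{x_0}$ to be defined on all of $(-1,1)\times\mathbb{R}$ with a uniform $\mathcal{H}$-bound, plus the characterization of the stationary set given by Proposition \ref{p1}. The former follows from the a priori estimate (\ref{15}) of the Liouville setting combined with the cone inclusion $\bar{\mathcal{C}}_{x_0,T(x_0),\delta_1}\subset D_u$ furnished by Proposition \ref{3.5}, so the standard $\omega$-limit argument (passing to subsequential limits and exploiting the Lyapunov structure) transfers verbatim.
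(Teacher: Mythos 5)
Your argument is correct and coincides with the proof the paper relies on (the real-case argument of Merle and Zaag in \cite{MR2415473}, which the paper cites rather than rewrites): forward convergence to a soliton from the trapping result (\ref{10}), backward identification of the limiting energy via Proposition \ref{Behavior1} and Corollary \ref{Behavior2}, the resulting sandwich $E(w_{x_0}(s))\equiv E(\kappa_0,0)$ forcing $\partial_s w_{x_0}\equiv 0$ through the dissipation identity of Proposition \ref{3.1}, and Proposition \ref{p1} together with $E(\kappa_0,0)>0$ to exclude the trivial stationary solution. Your closing remark, that Proposition \ref{Behavior1} and Corollary \ref{Behavior2} only need $w_{x_0}$ to be globally defined and bounded and hence apply in Case 2 as well, is precisely the point the paper leaves implicit.
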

\begin{cor}\label{3.8}
 Consider $x_1 < x_2$ two non characteristic points. Then, there exists $d_0 \in
(−1, 1)$ and $\theta_0 \in \mathbb{R}$ such that:\\
$(i)$ for all $(y, s) \in(−1, 1)\times\mathbb{R}$, $w_{x_1} (y, s)= w_{x_2} (y, s)= e^{iθ_0 }\kappa(d_0 , y)$,\\
$(ii)$ for all $\bar x \in [x_1 , x_2 ], T (\bar x) = T (x_1 ) + d_0 (\bar x-x_1 )$ and for all $(x, t) \in \mathcal{C}_{\bar x,T(\bar x),1}$,
\begin{equation}
 u(x,t)=  e^{i\theta_0} \kappa_0 
 \frac{(1-d_0^2)^{\frac{1}{p-1}}}{\left( T(\bar x)-t+ d_0 (x-(\bar x))\right)^\frac{2}{p-1}}.
\end{equation}
\end{cor}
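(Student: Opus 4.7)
The plan is to invoke Proposition \ref{3.7} at each of the two non-characteristic points, rewrite what this says about $u$ in the original $(x,t)$ variables, and then match the two resulting closed-form expressions on the (non-empty) overlap of their backward light cones.

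First, applying Proposition \ref{3.7} to $x_1$ and $x_2$ separately yields parameters $d_i \in (-1,1)$ and $\theta_i \in \mathbb{R}$ (for $i=1,2$) such that $w_{x_i}(y,s) = e^{i\theta_i}\kappa(d_i,y)$ for every $(y,s)\in(-1,1)\times\mathbb{R}$. Undoing the self-similar change of variables (\ref{trans_auto}) and plugging in the explicit form (\ref{defk}) of $\kappa$, one obtains that on the full backward light cone $\mathcal{C}_{x_i,T(x_i),1}$,
\[
u(x,t)=e^{i\theta_i}\kappa_0\frac{(1-d_i^2)^{1/(p-1)}}{(T(x_i)-t+d_i(x-x_i))^{2/(p-1)}}.
\]

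Next, I would observe that the two cones $\mathcal{C}_{x_1,T(x_1),1}$ and $\mathcal{C}_{x_2,T(x_2),1}$ meet (their right/left null rays intersect at $\xi^\ast=\tfrac12(x_1+x_2+T(x_1)-T(x_2))$, $\tau^\ast=\tfrac12(T(x_1)+T(x_2)-(x_2-x_1))$, and the region strictly below this apex is a non-empty open subset of the overlap). On this open set the two explicit formulas above must coincide. Taking moduli (equivalently, raising the identity to the power $(p-1)/2$) forces the affine functions $T(x_1)-t+d_1(x-x_1)$ and $T(x_2)-t+d_2(x-x_2)$ to be proportional; equating the $t$-coefficients (both equal to $-1$) makes the constant of proportionality $1$, so $d_1=d_2=:d_0$ and $T(x_1)-d_0 x_1=T(x_2)-d_0 x_2$. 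Restoring the phases gives $e^{i\theta_1}=e^{i\theta_2}=:e^{i\theta_0}$, which is part $(i)$ (after feeding $d_0,\theta_0$ back into (\ref{defk})).

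For part $(ii)$, set
\[
v(x,t)=e^{i\theta_0}\kappa_0\frac{(1-d_0^2)^{1/(p-1)}}{(T(x_1)-t+d_0(x-x_1))^{2/(p-1)}},
\]
a classical solution of (\ref{waveq}) on the half-plane $\Omega=\{t<T(x_1)+d_0(x-x_1)\}$. By construction $u=v$ on $\mathcal{C}_{x_1,T(x_1),1}\cup\mathcal{C}_{x_2,T(x_2),1}$. A finite-propagation/uniqueness argument for (\ref{waveq}) then extends the identity $u=v$ to the whole region of $D_u$ lying in $\Omega$ above the union of the two cones. Since $v$ blows up exactly on the line $\{t=T(x_1)+d_0(x-x_1)\}$, while $u$ blows up precisely on $\Gamma$, this forces $T(\bar x)=T(x_1)+d_0(\bar x-x_1)$ for every $\bar x\in[x_1,x_2]$, and re-reading $u=v$ on $\mathcal{C}_{\bar x,T(\bar x),1}$ yields the stated closed-form expression for $u$.

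I expect the only subtle step to be the propagation argument in part $(ii)$: namely, propagating the identity $u=v$ from the union of the two apical light cones to the full strip $\{(x,t)\in D_u:x_1\le x\le x_2\}$ and thereby concluding that the blow-up set of $u$ over $[x_1,x_2]$ is exactly the singular line of $v$. Everything else reduces to the algebraic matching of two explicit rational profiles.
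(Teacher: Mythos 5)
Your argument is correct and is essentially the argument the paper relies on: the paper states Corollary \ref{3.8} without proof, deferring to the real-valued case of Merle and Zaag \cite{MR2415473}, whose proof is exactly your scheme (apply Proposition \ref{3.7} at $x_1$ and $x_2$, match the two explicit profiles on the non-empty overlap of the backward light cones to get $d_1=d_2$, $e^{i\theta_1}=e^{i\theta_2}$ and the affine relation, then propagate $u=v$ by uniqueness/finite speed of propagation to the strip over $[x_1,x_2]$). The only addition needed in the complex setting is the identification of the phases, which you obtain correctly after equating moduli.
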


From $(ii)$ of Corollary \ref{3.8} and the remark after Proposition \ref{3.5}, we get the following:
\begin{cor}\label{3.9}
 The set of non characteristic points is a non empty interval $I_0$ and there
exist $d_0 \in (−1, 1)$ and $\theta_0$ such that on $I_0$ , the blow-up curve is a straight line with slope
$d_0$ . Moreover, for any $x_1 \in I_0$,

\begin{equation}\label{85}
 \forall (x, t) \in \cup_{\bar x \in I_0} \, \mathcal{C}_{\bar x,T(\bar x),1}, \, u(x,t)=  e^{i\theta_0} \kappa_0 
 \frac{(1-d_0^2)^{\frac{1}{p-1}}}{\left( T( x_1)-t+ d_0 (x-(x_1))\right)^\frac{2}{p-1}}.
\end{equation}
\end{cor}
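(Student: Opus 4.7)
The plan is to stitch together Proposition \ref{3.5} (existence of at least one non-characteristic point) with Corollary \ref{3.8}, applied to every pair in $I_0$, checking that the constants $(d_0,\theta_0)$ are actually intrinsic (independent of the chosen pair) and that the set $I_0$ is forced to be convex.

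First I would dispatch non-emptiness: the remark following Proposition \ref{3.5} (applied with $x_1=x_*$ and $\delta_1=(1+\delta_*)/2$) directly yields a point of $I_0$. Next, I fix once and for all some $x_\sharp\in I_0$ and show that the pair $(d_0,\theta_0)$ produced by Corollary \ref{3.8}(i) applied to $\{x_\sharp,x\}$ (for any other $x\in I_0$) does not depend on $x$. Indeed, Corollary \ref{3.8}(i) gives $w_{x_\sharp}(y,s)=e^{i\theta_0}\kappa(d_0,y)$, and by Proposition \ref{p1} the parametrization $(d,\theta)\mapsto e^{i\theta}\kappa(d,\cdot)$ of the non-zero stationary solutions is injective, so $(d_0,\theta_0)$ is determined by $w_{x_\sharp}$ alone. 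Combining this with Corollary \ref{3.8}(ii) applied to any pair $x_1<x_2$ in $I_0$ shows that $T(\bar x)=T(x_1)+d_0(\bar x-x_1)$ for every $\bar x\in [x_1,x_2]\cap I_0$ and with the same universal slope $d_0$; a standard gluing argument then yields that $T$ restricted to $I_0$ is the affine function $x\mapsto T(x_\sharp)+d_0(x-x_\sharp)$.

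The main (and essentially only non-routine) step is convexity of $I_0$: given $x_1<x_2$ in $I_0$ and $\bar x\in(x_1,x_2)$, I need to verify the non-characteristic condition (\ref{4}) at $\bar x$. I pick $\delta_0\in(|d_0|,1)$ (possible since $|d_0|<1$) and choose $\varepsilon>0$ so small that $[\bar x-\varepsilon/\delta_0,\bar x+\varepsilon/\delta_0]\subset [x_1,x_2]$; set $t_0=T(\bar x)-\varepsilon$. For any $(\xi,\tau)\in\mathcal{C}_{\bar x,T(\bar x),\delta_0}\cap\{t\ge t_0\}$ one has $\xi\in[x_1,x_2]$, hence by the affine formula for $T$ on $[x_1,x_2]$ (provided by Corollary \ref{3.8}(ii), which requires only $x_1,x_2\in I_0$, not $\bar x$),
\[
T(\xi)=T(\bar x)+d_0(\xi-\bar x)\ge T(\bar x)-|d_0|\,|\xi-\bar x|>T(\bar x)-\delta_0|\xi-\bar x|\ge\tau,
\]
so $(\xi,\tau)\in D_u$, which is exactly (\ref{4}). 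Hence $\bar x\in I_0$ and $I_0$ is an interval.

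Finally, the explicit formula (\ref{85}) is read off Corollary \ref{3.8}(ii) at any $\bar x\in I_0$: on $\mathcal{C}_{\bar x,T(\bar x),1}$, $u(x,t)=e^{i\theta_0}\kappa_0(1-d_0^2)^{1/(p-1)}(T(\bar x)-t+d_0(x-\bar x))^{-2/(p-1)}$, and substituting $T(\bar x)=T(x_1)+d_0(\bar x-x_1)$ collapses the denominator to $T(x_1)-t+d_0(x-x_1)$, which is independent of the auxiliary $\bar x$. Taking the union over $\bar x\in I_0$ gives the stated representation on $\bigcup_{\bar x\in I_0}\mathcal{C}_{\bar x,T(\bar x),1}$. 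The main obstacle is the convexity step above; everything else is essentially bookkeeping given Corollary \ref{3.8} and Proposition \ref{p1}.
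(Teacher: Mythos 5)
Your proposal is correct and follows exactly the route the paper indicates: the paper derives this corollary in one line from the remark after Proposition \ref{3.5} (non-emptiness) and from part (ii) of Corollary \ref{3.8} (affine blow-up curve and the explicit formula), which is precisely the skeleton you use. You merely supply the details the paper leaves implicit — the intrinsic character of $(d_0,\theta_0)$ via the injectivity of $(d,\theta)\mapsto e^{i\theta}\kappa(d,\cdot)$, and the convexity of $I_0$ obtained by fitting a cone of slope $\delta_0\in(|d_0|,1)$ under the affine graph of $T$ on $[x_1,x_2]$ — and these details are accurate.
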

\bigskip

\noindent $\rightsquigarrow ${\it Step $3$: The set of non characteristic points is given by the hole space $\mathbb{R}$}: \\

\begin{lem}
We have
 \begin{equation}
  I_0 = \mathbb{R}.
 \end{equation}
\end{lem}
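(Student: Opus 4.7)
The plan is to argue by contradiction, showing that if $I_0$ had a finite endpoint, the explicit representation of $u$ on $I_0$ coming from Corollary \ref{3.9} would force the blow-up curve to continue linearly beyond that endpoint, thus producing new non-characteristic points there.

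Suppose toward contradiction that $I_0 \neq \mathbb{R}$. By Corollary \ref{3.9}, $I_0$ is a non-empty interval, so it admits at least one finite endpoint; using the invariance $x \mapsto -x$ of equation (\ref{waveq}), I may assume $b = \sup I_0 < +\infty$. Fix $x_1 \in I_0$ and let $d_0 \in (-1,1)$, $\theta_0 \in \mathbb{R}$ be the parameters supplied by Corollary \ref{3.9}. Introduce the explicit candidate
\[
U(x,t) = e^{i\theta_0}\,\kappa_0\,\frac{(1-d_0^2)^{\frac{1}{p-1}}}{(T(x_1)-t+d_0(x-x_1))^{\frac{2}{p-1}}},
\]
which is a smooth solution of (\ref{waveq}) on the entire half-plane $H=\{(x,t)\,|\, t<T(x_1)+d_0(x-x_1)\}$.

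By formula (\ref{85}), we have $u \equiv U$ on the open set $\Omega = \bigcup_{\bar x \in I_0}\mathcal{C}_{\bar x, T(\bar x), 1} \subset D_u \cap H$. The next step is to propagate this equality slightly past $b$. Pick any horizontal slice $\{t = t_*\}$ with $t_*$ close to $T(x_1)$, on which a non-trivial interval containing $\{x_1\}$ lies in $\Omega$. On this slice, $u$ and $U$ share the same Cauchy data; by the finite speed of propagation and the uniqueness part of the Cauchy theory for (\ref{waveq}), the equality $u = U$ extends to the full forward domain of dependence of that slice, which covers a neighborhood of the half-line $\{(x, T(x_1)+d_0(x-x_1)) : x>b,\ x-b \text{ small}\}$ (intersected with $D_u$). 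Because $|U(x,t)| \to +\infty$ as $t \to T(x_1)+d_0(x-x_1)^{-}$, the maximality of $D_u$ forces $T(x) = T(x_1) + d_0(x-x_1)$ on a right-neighborhood of $b$.

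To conclude, observe that on that neighborhood the blow-up curve is a straight line with slope $d_0$ satisfying $|d_0|<1$. For any $\delta_1 \in (|d_0|,1)$ and any $x>b$ close enough to $b$, the line $\tau = T(x) - \delta_1|\xi-x|$ lies strictly below the blow-up curve in a neighborhood of $x$, so the cone $\mathcal{C}_{x,T(x),\delta_1}\cap\{t\ge t_0\}$ is contained in $D_u$ for some $t_0<T(x)$. Therefore $x$ is non-characteristic in the sense of (\ref{4}), contradicting $b=\sup I_0$. Hence $I_0 = \mathbb{R}$.

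The step I expect to be the main obstacle is the uniqueness-based extension of the identity $u \equiv U$ from $\Omega$ to a neighborhood of the candidate blow-up line past $b$: one must carefully select a spacelike slice inside $\Omega$ whose domain of dependence reaches across the vertical line $\{x=b\}$, and then invoke finite speed of propagation up to (but not beyond) the blow-up set of $U$. Once this extension is secured, the geometric verification of the cone condition is straightforward.
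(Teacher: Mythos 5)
There is a genuine gap at the decisive step, namely the propagation of the identity $u\equiv U$ past $b=\sup I_0$. The set $\Omega=\bigcup_{\bar x\in I_0}\mathcal{C}_{\bar x,T(\bar x),1}$ on which (\ref{85}) gives $u=U$ is a union of backward light cones of slope $1$, and since $T$ is the straight line $T(x_1)+d_0(\cdot-x_1)$ on $I_0$, the upper boundary of $\Omega$ in the region $x>b$ is the \emph{characteristic} line $t=T(b)-(x-b)$. A horizontal slice $\Omega\cap\{t=t_*\}$ therefore has its right endpoint exactly at $x=b+(T(b)-t_*)$, and the forward domain of determinacy of that slice is bounded on the right by the very same line $t=T(b)-(x-b)$: causality adds nothing to $\Omega$ for $x>b$. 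Hence uniqueness plus finite speed of propagation cannot yield $u=U$ anywhere in the wedge $\{x>b,\ T(b)-(x-b)<t<T(b)+d_0(x-b)\}$, which is nonempty because $d_0>-1$ and is precisely where you need the identity in order to force $T(x)=T(b)+d_0(x-b)$ to the right of $b$. The values of $u$ in that wedge depend on Cauchy data outside the interval where $u=U$ is known, so no refinement of the choice of spacelike slice can repair this; the "main obstacle" you flag at the end is in fact an impassable one for a purely causal argument, and some additional structural input is required.

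That input is exactly what the paper (quoting the geometric argument of Merle and Zaag \cite{MR2415473}) uses instead: the localization result, Proposition \ref{3.5}, applied at points $x_1>b$ with slope $\delta_1\in(\max(\delta_*,|d_0|),1)$, produces non-characteristic points $x_0$ with $(x_0,T(x_0))\in\bar{\mathcal{C}}_{x_1,T(x_1),\delta_1}$ and $\bar{\mathcal{C}}_{x_0,T(x_0),\delta_1}\subset D_u$. Since such $x_0$ must lie in $I_0$, combining the cone condition with the $1$-Lipschitz bound on $T$ and the linearity of $T$ on $I_0$ (Corollary \ref{3.9}) pins $x_0$ down near $b$ and lets one verify the non-degeneracy condition (\ref{4}) at and beyond $b$, giving the contradiction. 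Your last step (linearity of $T$ with slope of modulus less than $1$ implies non-characteristicity) is fine; it is the extension of $u\equiv U$ across the characteristic boundary of $\Omega$ that fails, and you should replace it by an argument in the spirit of Proposition \ref{3.5} and Corollary \ref{3.8}.
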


\begin{proof} 
The proof is the same as in the real case in \cite{MR2415473}, we have only to replace $ \pm \theta$ by  $e^{i \theta}$. In fact, it is base on a geometrical approach, which remains valid, regardless of the real or complex value of the solution.
\end{proof}

\bigskip

\noindent $\rightsquigarrow ${\it Step $4$: Conclusion}: \\
In order to conclude the proof of Theorem \ref{2}.
If $I_0 = \mathbb{R}$, then we see from Corollary \ref{3.9} that the blow-up curve is a straight line of slope $d_0$ whose equation is
 \begin{equation}
t=T(x) \mbox{ with }\forall x \in \mathbb{R}, T(x)=T(x_*)+d_0(x-x_*) \label{87}
 \end{equation}
and that
 $$D_u =\{(x,t)|\; t<T(x_?)+d_0(x-x_*)\}$$
which contains $ \mathcal{C}_{x_*,T(x_*),1}  $ by the fact that $T(x_*) \ge T_*$ (see (\ref{65})). Using (\ref{65}) and (\ref{87}), we see that $|d_0| \le \delta_*$, hence  $\mathcal{C}_{x_*,T(x_*),\delta_*} \subset D_u$. Moreover, since $\bigcup_{\bar x \in I_0} \mathcal{C}_{\bar x,T(\bar x),1} = D_u$, we see that  (\ref{85}) implies  (\ref{16}) with $T_0 = T(x_*)$ and $x_0 = x_*$. This concludes the proof of Theorem \ref{2}.

\end{proof}

\noindent{\bf Address:}\\
Universit\'e de Cergy-Pontoise, 
Laboratoire Analyse G\'eometrie Mod\'elisation, \\
  CNRS-UMR 8088, 2 avenue Adolphe Chauvin
95302, Cergy-Pontoise, France.
\\ \texttt{e-mail: asma.azaiez@u-cergy.fr}\\
\end{document}